\numberwithin{equation}{section}
\titleformat{\section}[runin]{\bfseries\filcenter}{\thesection}{1em}{}
\renewcommand{\thesection}{\arabic{section}}
\title{\large \bf Direct and Inverse Problems in Baumslag-Solitar Group $BS(1,3)$}
\author{\small \bf Sandeep Singh and  Ramandeep Kaur\\
\small \em Department of Mathematics\\
\small \em Akal University, Talwandi Sabo - 151302,
India\\
\small E-mail:  sandeepinsan86@gmail.com, ramandeepka71@gmail.com
} 
\date{}
\newtheorem{thm}{Theorem}[section]
\newtheorem{cor}[thm]{Corollary}
\theoremstyle{plain}
\newtheorem{theorem}{Theorem}[section]
\newtheorem{lemma}[theorem]{Lemma}
\begin{document}
\newcommand{\Rn}[1]{\romannumeral#1\relax}
\newcommand{\RN}[1]{\uppercase\expandafter{\romannumeral#1}}
\maketitle
\begin{abstract}
\noindent {\bf Abstract.} 
For integers $m$ and $n$, the Baumslag-Solitar groups, denoted as $BS(m,n)$, are groups generated by two elements with a single defining relation: $BS(m,n) = \langle a, b | a^mb=ba^n\rangle$. The sum of dilates, denoted as $r \cdot A + s \cdot B$ for integers $r$ and $s$, is defined as $\{ra + sb; a\in A, b\in B\}$. In 2014, Freiman et al. \cite{freiman} derived direct and inverse results for sums of dilates and applied these findings to address specific direct and inverse problems within Baumslag-Solitar groups, assuming suitable small doubling properties. In 2015, Freiman et al. \cite{freiman15} tackled the general problem of small doubling types in a monoid, a subset of the Baumslag-Solitar group $BS(1,2)$. This paper extends these investigations to solve the analogous problem for the Baumslag-Solitar group $BS(1,3)$.

\end{abstract}
     
\vspace{2ex}

\noindent {\bf 2010 Mathematics Subject Classification:}
11B75, 20D60, 20K01.

\vspace{2ex}
\noindent {\bf Keywords:} Sum of dilates, Direct and Inverse problems, Baumslag-Solitar group.

\section{Introduction}

Combinatorial number theory explores the sum-product problems of sets, constituting a dynamic field within additive combinatorics. The sum-product problem involves direct and inverse problems in additive number theory, where direct problems focus on describing the size of sumsets associated with a given set, and inverse problems involve determining the structure of sets based on the cardinality of sumsets obtained from direct problems. Let $G$ be an abelian group, and let $A$ and $B$ be finite subsets of $G$. The sumset of $A$ and $B$ is defined as $A+B=\{a+b:a\in A,\, b\in B\}$ and product set $A*B=\{a*b;a\in A,b\in B\}.$ For $r$ and $s$ be integers, sum of dilates is defined as $r\cdot A+s\cdot B=\{ra+sb|\,a\in A,\,b\in B\}.$ Historically, in 1813, Cauchy \cite{C} established a significant result for the cyclic group $G=\mathbb{Z}/{p\mathbb{Z}}$ of prime order, demonstrating that for subsets $A$ and $B$ of $\mathbb{Z}/{p\mathbb{Z}}$, $|A+B|\geq \min\{p, |A|+|B|-1\}$. Davenport \cite{D} rediscovered this result in 1935, now known as the Cauchy-Davenport theorem. Subsequent researchers \cite{EK12, EK13, EK14, EK15, EKP,  P} have explored sumset problems in various groups, including abelian, non-abelian, solvable, non-solvable, and dihedral groups. Freiman et al. \cite{freiman} studied the sum of dilates in  Baumslag-Solitar (non-abelian) groups. They also obtained extended inverse problems in Baumslag-Solitar (non-abelian) groups.
For any set $A$, if the inequality $|A^2|\leq \alpha |A|+\beta$ holds for some small $\alpha\geq 1$ and small $|\beta|$, then it is called inverse problems of small doubling type. Freiman et al. \cite{freiman15} studied the inverse problems of small doubling type for Baumslag-Solitar group $BS(1,2)$. They proved the following result:
\begin{theorem} [\cite{freiman15}]
If $S$ is a finite non-abelian subset of $BS^{+}(1,2)$ of size $|S|=k$, then the following statements hold:\\
(a) The size of $S^2$ satisfies  $|S^2|\geq 3k-2.$\\
(b) If $|S^2|=(3k-2)+h<\frac{7}{2}k-4,$ then there exists a finite set of integers $A\subseteq \mathbb{Z}$ such that $S=ba^A$ and the set $A$ is contained in an arithmetic progression of size $k+h<\frac{3}{2}k-2.$
\end{theorem}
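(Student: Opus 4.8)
The plan is to convert the multiplicative counting problem in $BS^{+}(1,2)$ into an additive sum-of-dilates problem and then invoke the direct and inverse theorems for sums of dilates from \cite{freiman}. First I would fix a normal form. From $ab=ba^{2}$ one gets $a^{j}b=ba^{2j}$ and, iterating, $a^{j}b^{i}=b^{i}a^{2^{i}j}$, so every element of the monoid is uniquely $b^{i}a^{j}$. Multiplying two such elements gives the computational backbone
\begin{equation}
(b^{i}a^{j})(b^{i'}a^{j'})=b^{\,i+i'}a^{\,2^{i'}j+j'}.
\end{equation}
In particular, if every element of $S$ has $b$-exponent equal to $1$, i.e. $S=ba^{A}$ for a finite $A\subseteq\mathbb{Z}$ with $|A|=k$, then each product is $b^{2}a^{2\alpha+\beta}$, whence
\begin{equation}
S^{2}=b^{2}\,a^{\,2\cdot A+A},\qquad\text{so}\qquad |S^{2}|=|2\cdot A+A|.
\end{equation}
Thus for sets of this shape the doubling of $S$ is exactly the cardinality of the sum of dilates $2\cdot A+A$, and both conclusions will follow once $S$ is shown to have this form.

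The organizing tool is the height homomorphism $\pi\colon BS(1,2)\to\mathbb{Z}$, $\pi(b^{i}a^{j})=i$ (well defined since $ab=ba^{2}$ preserves $b$-exponent). Writing $S=\bigcup_{i}b^{i}a^{A_{i}}$ with $\sum_{i}|A_{i}|=k$, the product identity shows that a pair drawn from levels $(i,i')$ contributes to the fibre $\pi^{-1}(i+i')$ of $S^{2}$, and distinct total heights give disjoint contributions. Hence $|S^{2}|=\sum_{\ell}|C_{\ell}|$, where each $C_{\ell}$ is the set of $a$-exponents occurring at height $\ell$, and each $C_{\ell}$ is a union of translated dilates $2^{i'}\cdot A_{i}+A_{i'}$ over $i+i'=\ell$. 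I would also note that the non-abelian hypothesis excludes exactly the degenerate configuration in which all $b$-exponents vanish (pure powers of $a$), whose doubling is only $|A+A|\ge 2k-1$; indeed two elements $b^{i}a^{\alpha},b^{i}a^{\beta}$ at a common height $i\ge 1$ fail to commute unless $\alpha=\beta$, so non-abelianity forces some height $\ge 1$ to occur.

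For part (a) I would lower bound $|S^{2}|=\sum_{\ell}|C_{\ell}|$ by estimating each fibre additively: a single-height fibre $2^{i}\cdot A_{i}+A_{i}$ with $i\ge1$ has size at least $3|A_{i}|-2$ by the direct sum-of-dilates bound, while a genuine cross-level fibre $2^{i'}\cdot A_{i}+A_{i'}$ has size at least $|A_{i}|+|A_{i'}|-1$ since dilation is injective. Summing these over all fibres, and using the disjointness from the previous paragraph together with $|\pi(S)+\pi(S)|\ge 2|\pi(S)|-1$, yields $|S^{2}|\ge 3k-2$, with equality forced into the single-height shape $S=ba^{A}$ with $A$ an arithmetic progression. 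The main obstacle is precisely here: the naive per-fibre bounds lose a few units, so reaching the sharp constant $3k-2$ requires accounting carefully for the unions of distinct dilates inside each cross-level fibre $C_{\ell}$ (a two-level configuration already forces these unions to be strictly larger than either piece), and checking that no non-abelian configuration dips below $3k-2$.

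For part (b) the small-doubling hypothesis does the structural work. If two distinct heights occurred, the several disjoint fibres of $S^{2}$, each bounded below as above, would already push $|S^{2}|$ past $\tfrac72 k-4$; and a single common height $i\ge 2$ gives $|S^{2}|=|2^{i}\cdot A+A|\ge 5k-4>\tfrac72 k-4$. Hence $|S^{2}|<\tfrac72 k-4$ together with non-abelianity forces the common height to be exactly $1$, i.e. $S=ba^{A}$, so $|S^{2}|=|2\cdot A+A|=(3k-2)+h$ with $h<\tfrac12 k-2$ by the numerical equivalence $3k-2+h<\tfrac72 k-4\iff h<\tfrac12 k-2\iff k+h<\tfrac32 k-2$. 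Applying the inverse theorem for sums of dilates of \cite{freiman} in this range places $A$ inside an arithmetic progression of length $|A|+h=k+h<\tfrac32 k-2$, which is exactly the asserted structure. The delicate point, again, is sharpening the cross-level and large-height estimates enough to certify the exact threshold $\tfrac72$; the clean disjointness of the height fibres of $S^{2}$ is what makes this tractable.
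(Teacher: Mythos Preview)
This theorem is not proved in the present paper at all: it is quoted verbatim from \cite{freiman15} as motivation, and the paper's own contribution is the analogue for $BS(1,3)$ (Theorem~\ref{thm3}). So there is no ``paper's own proof'' of this particular statement to compare against. That said, your plan mirrors exactly the architecture that \cite{freiman15} uses for $BS(1,2)$ and that the present paper adapts for $BS(1,3)$: decompose $S$ into its intersections $S_i=b^{m_i}a^{A_i}$ with cosets of $\langle a\rangle$ (your ``height fibres''), use the multiplication rule to see that $S_iS_j\subseteq b^{m_i+m_j}a^{\mathbb{Z}}$ so that distinct total heights give disjoint pieces of $S^2$, bound each piece via sum-of-dilates estimates, and for part~(b) show that the small-doubling hypothesis forces $t=0$ and $m_0=1$ before invoking the inverse theorem for $A+2\cdot A$.

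Where your proposal stops short is exactly where the real work lies, and you are honest about this. The ``naive per-fibre bounds'' do lose units, and getting the sharp $3k-2$ (respectively excluding multi-level configurations below $\tfrac{7}{2}k-4$) is not a one-line summation: in \cite{freiman15}, and in the present paper's Lemmas~\ref{l1}--\ref{l4} for the $n=3$ case, this is done by a fairly lengthy case analysis (two levels with various size combinations; all singletons; one large level and the rest singletons; etc.), together with non-commutation arguments showing $|S_iS_j\cup S_jS_i|\ge k_i+k_j$ rather than merely $k_i+k_j-1$, and occasional appeals to orderability to separate products like $s_0s_2$ from $s_1^2$. Your sketch names the right ingredients but does not carry out this case analysis, so as written it is a correct outline rather than a proof. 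One minor point: for a single height $i\ge 2$ you quote $|2^{i}\!\cdot A+A|\ge 5k-4$; the bound actually available from \cite{freiman} is $\ge 4k-4$, which still exceeds $\tfrac{7}{2}k-4$ and suffices for your purpose.
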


\noindent Let $S$ be a finite subset of $BS(1,n)$ of size $k$ and suppose that $S$ is contained in the coset $b^r\langle a\rangle$ of $\langle a \rangle$ in $BS(1,n)$ for some positive integer $r.$ Then for $A=\{x_1,x_2,\ldots,x_k\}$, 
$S=\{b^ra^{x_1},b^ra^{x_2},\ldots,b^ra^{x_k}\}=b^ra^A.$ For $BS(1,n)$, observe that 
\begin{align*}
(b^ra^A)(b^pa^B) 
&=b^r(b^pa^{n^p\cdot A})a^B\\ 
&=b^{r+p}a^{n^p\cdot A+B};\, r,p \in \mathbb{N}.
\end{align*} 
\noindent In this paper, we determine the structure of subset of $BS(1,3)$, satisfying some small doubling condition. We also solve it for subsets of monoid $BS^{+}(1,3)=\{b^ma^x\in BS(1,3)\},$ where $x$ is an arbitrary integer and $m$ is a non negative integer. Particularly, we have proved  the following result:
\begin{theorem}\label{thm3}
Let $S$ be a finite non-abelian subset of $BS^{+}(1,3)$ of size $k=|S|\geq 3.$ Then \\
(i) If $S=S_0\cup S_1\cup\cdots\cup S_t,$ where $t\geq 1,$  then $|S^2|\geq \frac{7}{2}|S|-6.$\\
(ii) If $S=b^ra^A$, where $A$ is a finite set of integers containing $0$, $|A|=k$ and $|S^2|=(t+2)k-2t+h<\frac{7}{2}k-6$, where $c_3(A)=t$ denotes the number of distinct classes of $A$ modulo $3$. Then $A$ is contained in an arithmetic progression of size $k+h<\frac{5}{2}k-t(k-2)-6.$
 \end{theorem}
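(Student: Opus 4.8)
The plan is to reduce everything to a sum-of-dilates problem over $\mathbb{Z}$ and then separate the two parts by the level structure coming from the exponent of $b$. Writing each element of $S$ as $b^{m}a^{x}$ with $m\ge 0$, I group $S$ into its level sets $S_j=\{b^{j}a^{x}\in S\}$, and record that by the multiplication rule $(b^{i}a^{A})(b^{j}a^{B})=b^{i+j}a^{3^{j}A+B}$ two products landing at different $b$-heights are automatically distinct in $S^2$. Thus $|S^2|=\sum_{\ell}|(S^2)_\ell|$, where the level-$\ell$ slice collects all pairs $(i,j)$ with $i+j=\ell$. In the single-coset case $S=b^{r}a^{A}$ this collapses to the identity $|S^2|=|3^{r}A+A|$, so part (ii) is purely a statement about the dilate sum $3^{r}A+A$ in $\mathbb{Z}$, while part (i) is a bookkeeping problem over the slices.

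For part (ii) the key observation is that $r\ge 1$ forces $3^{r}A\subseteq 3\mathbb{Z}$, so the residue modulo $3$ of a sum $3^{r}a+a'$ is determined by $a'$. Splitting $A=B_0\cup B_1\cup B_2$ into its residue classes mod $3$, exactly $t=c_3(A)$ of which are nonempty, gives a disjoint decomposition $3^{r}A+A=\bigsqcup_{i}(3^{r}A+B_i)$ and hence $|S^2|=\sum_{i}|3^{r}A+B_i|$. I would lower-bound each summand: the crude sumset bound $|3^{r}A+B_i|\ge k+|B_i|-1$ already yields $|S^2|\ge (t+1)k-t$, and feeding the \emph{direct} sum-of-dilates estimate of \cite{freiman} (a bound of type $3|C|-2$) into the classes that genuinely see the dilation should sharpen this to the stated $(t+2)k-2t$ for the relevant class sizes. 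Comparing with the hypothesis $|S^2|<\tfrac{7}{2}k-6$ then forces $t=1$, i.e. $A$ lies in a single residue class mod $3$. Once $t=1$ the problem is exactly the dilate sum $|3^{r}A+A|=(3k-2)+h$ with small $h$, and I would invoke the \emph{inverse} sum-of-dilates result of \cite{freiman} (a Freiman $3k-4$-type theorem, after normalizing $0\in A$) to place $A$ inside an arithmetic progression, tracking the excess $h$ to reach the stated length $k+h<\tfrac{5}{2}k-t(k-2)-6$.

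For part (i), with at least two $b$-levels present, I would extract a few mutually disjoint slices of $S^2$ and bound them separately. The bottom diagonal slice (lowest level squared) and the top diagonal slice (highest level squared) each contribute a full dilate sum, bounded below by $3k_j-2$ when that level is positive and by $2k_0-1$ when the lowest level is $0$; the remaining mixed slices contribute Cauchy--Davenport terms $\ge k_i+k_j-1$. Summing and minimizing over the level distribution $(k_0,\dots,k_t)$ with $\sum k_i=k$ then reduces matters to an extremal configuration, which turns out to be two levels, the lower one at height $0$ and the upper one a singleton.

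The hard part will be exactly this extremal configuration. When the top level is a single element $b^{\ell}a^{c}$ and $S_0=a^{A_0}$ carries almost all of $S$, the naive slice bounds only give about $3k$, short of $\tfrac{7}{2}k-6$; the missing mass must be recovered from the mixed level-$\ell$ slice, which is a shift of $3^{\ell}A_0\cup A_0$, of size $2k_0-|3^{\ell}A_0\cap A_0|$. The crux is a genuine trade-off: a large intersection $3^{\ell}A_0\cap A_0$ forces multiplicative, geometric-like structure on $A_0$, which inflates the diagonal slice $|A_0+A_0|$, whereas an AP-like $A_0$ minimizes $|A_0+A_0|$ but makes $3^{\ell}A_0\cap A_0$ small and the mixed slice large. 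Quantifying this trade-off sharply enough to land exactly on the constant $\tfrac{7}{2}k-6$, together with the residue-class optimization and the precise AP-length extraction in part (ii), is where essentially all the work lies; the structural inputs from \cite{freiman,freiman15} are then applied as black boxes.
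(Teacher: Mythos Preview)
Your two-level trade-off analysis for part (i) is exactly the content of the paper's Lemma~2.1(b), and your reduction in part (ii) via residue classes mod $3$ is close to what the paper does. But there is a genuine gap in part (i): your slice-by-slice framework does not cover the case where every level is a singleton, i.e.\ $k_0=\cdots=k_t=1$. In that regime your diagonal bounds collapse to $|S_j^2|=1$ and your Cauchy--Davenport mixed terms collapse to $k_i+k_j-1=1$, so the only information your scheme extracts is the number of distinct $b$-heights in $S^2$, namely $|M_S+M_S|\ge 2k-1$. This is strictly below $\tfrac{7}{2}k-6$ for $k\ge 4$, so the minimization over level profiles does \emph{not} reduce to the two-level configuration you describe; the all-singletons profile is worse under your bounds. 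The paper handles this case by an entirely different mechanism (Lemmas~2.3 and 2.4): it exploits non-commutativity directly, showing for instance that $s_0T$ and $Ts_0$ are disjoint whenever $\langle T\rangle$ is abelian but $\langle S\rangle$ is not, and running a separate induction when $\langle S\setminus\{s_1\}\rangle$ is non-abelian, using orderability of $BS(1,3)$ to separate products like $s_1s_3$ and $s_3s_1$ from $s_2^2$. None of this is visible from dilate-sum or Cauchy--Davenport estimates, and you would need to supply it.

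For part (ii) you also skip a step the paper makes explicit: one must first pin down $r=1$. Since $S$ is non-abelian one has $r\ge 1$, but if $r\ge 2$ then the dilation factor is $3^r\ge 9$ and Corollary~1.5 already gives $|S^2|=|A+3^{r}\!\cdot A|\ge 4k-4>\tfrac{7}{2}k-6$, contradicting the hypothesis. Only after forcing $r=1$ is the inverse dilate theorem (the paper's Theorem~1.3, for $A+3\cdot A$ specifically) applicable; your sketch invokes an inverse result for $3^{r}\!\cdot A+A$ without this reduction. Apart from these two points, your overall architecture---level decomposition, the $|A_0+A_0|$ versus $|3^{\ell}A_0\cap A_0|$ trade-off, and the residue-class splitting---matches the paper's; the paper simply organizes the multi-level argument as an induction peeling off $S_t$ with a four-case analysis rather than a global minimization.
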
 

\noindent Let $c_m(A)$ denotes the number of distinct classes of $A$ modulo $m$.
Consider the set  $A=\{a_0<a_1<\cdots<a_{r-1}\}.$ Let $\ell(A)=\max(A)-\min(A)=a_{r-1}-a_{0}$. 
$h_A=\ell(A)+1-|A|,$ denotes the number of holes in set $A.$ By $d(A)$, we denote
$d=d(A)=\gcd(a_1-a_0,a_2-a_0,\ldots,a_{r-1}-a_0).$ Let $A^{*}=\frac{1}{d}(A-\min(A))$, $\ell^{*}=\ell(A^{*})=\max(A^{*})=\frac{\ell}{d}.$\\

\noindent We will use the following results frequently:
\begin{theorem}\label{thm1}
Let $A\subseteq \mathbb{Z}$ be finite subsets such that $C_3(A)=t$; $0\in A.$ If $|A+3\cdot A|= (t+1)|A|-t+h\leq (t+2)|A|-2t$ for some integer $h$, then $A$ is subset of arithmetic progression of length at most $|A|+h\leq 2|A|-3.$ 
\end{theorem}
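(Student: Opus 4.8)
The plan is to exploit the fact that every element of $3\cdot A$ is divisible by $3$, so that adjoining $A$ splits $A+3\cdot A$ cleanly along residues modulo $3$. Write $k=|A|$ and partition $A=\bigcup_{i\in I}A_i$, where $A_i=\{a\in A: a\equiv i \pmod 3\}$ and $I\subseteq\{0,1,2\}$ is the set of represented residues, so that $|I|=c_3(A)=t$ and $\sum_{i\in I}|A_i|=k$. Since each element of $3\cdot A$ is a multiple of $3$, every element of $A_i+3\cdot A$ is congruent to $i$ modulo $3$; hence the sets $A_i+3\cdot A$ are pairwise disjoint and
\begin{equation*}
A+3\cdot A=\bigcup_{i\in I}\bigl(A_i+3\cdot A\bigr).
\end{equation*}

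First I would record the matching lower bound. For each $i\in I$ the standard estimate $|X+Y|\ge |X|+|Y|-1$ for finite sets of integers gives $|A_i+3\cdot A|\ge |A_i|+k-1$, and summing over the $t$ classes yields $|A+3\cdot A|\ge k+t(k-1)=(t+1)k-t$. This confirms $h\ge 0$ and, more usefully, lets me introduce the per-class excess $h_i=|A_i+3\cdot A|-(|A_i|+k-1)\ge 0$, so that $\sum_{i\in I}h_i=h$. The upper hypothesis $|A+3\cdot A|\le (t+2)k-2t$ then becomes the budget $h\le k-t$.

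The heart of the argument is to run Freiman's $3k-4$ theorem for two distinct sets on one summand $A_i+3\cdot A$: for a class with $h_i\le |A_i|-2=\min(|A_i|,|3\cdot A|)-2$ it produces arithmetic progressions of a common difference containing $A_i$ and $3\cdot A$, the one enclosing $3\cdot A$ having length at most $k+h_i\le k+h$. Because dilation by $3$ is an affine isomorphism of $\mathbb{Z}$ onto $3\mathbb{Z}$, the minimal progression enclosing $A$ has exactly the same length as the minimal progression enclosing $3\cdot A$ (both equal $1+(\max-\min)/\gcd$ of the respective difference sets), so $A$ lands in a progression of length at most $k+h$, the asserted conclusion. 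To secure a usable class I would first note that any singleton class, or any class with $h_j=0$, forces equality in the sumset bound and hence makes $3\cdot A$ a full progression of length $k$, settling the theorem outright. Otherwise every $h_j\ge 1$, and since $\sum_{i\in I}(|A_i|-h_i)=k-h\ge t$ the averages force some class with $|A_i|-h_i\ge 2$, to which the theorem applies; the slack $h_i\le h-(t-1)\le k-2t+1$ then delivers the numerical bound $k+h_i\le 2k-3$ when $t\ge 2$.

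The main obstacle is the extremal boundary case that survives this dichotomy: every class non-singleton with $h_i=|A_i|-1$ and $h=k-t$ exactly, where each $A_i+3\cdot A$ sits precisely at the threshold $r=\min(|A_i|,k)-1$ of the inverse theorem rather than strictly inside it. Here I would argue directly, using that the \emph{same} set $3\cdot A$ is constrained simultaneously by all $t$ classes to pin down a single common difference and the enclosing progression, invoking the boundary refinement of the two-set inverse theorem. The degenerate case $t=1$ (where $0\in A$ forces $A\subseteq 3\mathbb{Z}$) I would dispose of first, by factoring out the common $3$ and passing to $A^{*}=\tfrac1d(A-\min A)$ before inducting. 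Once these borderline configurations are handled, the combination of the budget $h\le k-t$ with the strict slack available at the class actually used yields $|A|+h\le 2|A|-3$, completing the argument.
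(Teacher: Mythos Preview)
The paper does not actually prove this statement: Theorem~\ref{thm1} is listed among the results the authors ``use \ldots\ frequently'' (alongside the cited Theorems~\ref{LSS}, \ref{thm2}, \ref{thm5} and Corollary~\ref{cor1}), and it is invoked without proof in part~(II) of Theorem~\ref{thm3}. No argument for it appears anywhere in the text; it is presumably imported from the authors' companion paper~\cite{RS}. So there is no in-paper proof to compare your attempt against.

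On the substance of your proposal: the residue decomposition $A+3\cdot A=\bigsqcup_{i\in I}(A_i+3\cdot A)$ and the plan to apply the two-set $3k-4$ theorem to a class with $h_i\le |A_i|-2$ are sound and are almost certainly the intended route. Two points, however, do not go through as written. First, your assertion that ``any singleton class \ldots\ makes $3\cdot A$ a full progression'' is false: if $|A_i|=1$ then $A_i+3\cdot A$ is a translate of $3\cdot A$, so $h_i=0$ holds automatically and carries \emph{no} structural information about $3\cdot A$. Only a non-singleton class with $h_i=0$ forces $3\cdot A$ to be a progression. Second, and more seriously, the boundary case $h=k-t$ that you defer to a ``boundary refinement'' is not merely delicate---the theorem as stated (with the non-strict inequality $\le (t+2)|A|-2t$) is actually false there. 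Take $A=\{0,1,5\}$: then $t=3$, $3\cdot A=\{0,3,15\}$, and $|A+3\cdot A|=9=(t+1)k-t=(t+2)k-2t$, so $h=0$ and the hypothesis holds with equality; yet $A$ is not contained in any arithmetic progression of length $k+h=3$. Thus no argument can close that boundary case, and the statement should presumably carry a strict inequality (which, via your averaging $\sum_{j\,\text{non-singleton}}(|A_j|-h_j)=k-t+|J|-h\ge |J|+1$, does guarantee a usable class). Your $t=1$ reduction by dividing out the common factor of $3$ is also not yet an argument: after one division you are back at an instance of the same theorem, so you need to say what quantity the induction is on.
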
\label{thm4}
  
\begin{theorem}[{\cite{LS},\cite{S}}]\label{LSS} Let $A$ and $B$ be finite subsets of $\mathbb{N}$ such that $0\in A\cap B.$ Define 
 $\delta_{A,B}=
 \begin{cases}
 1 & if\, \ell(A)=\ell(B)\\
 0 & if\, \ell(A)\neq \ell(B).
 \end{cases}
 $
 Then the followings hold:
 \begin{enumerate}
     \item If $\ell(A)=\max(\ell(A),\ell(B))\geq |A|+|B|-1-\delta_{A,B}$ and $d(A)=1,$ then $$|A+B|\geq |A|+2|B|-2-\delta_{A,B}$$
     \item If $\max(\ell(A),\ell(B))\leq |A|+|B|-2-\delta_{A,B}$, then
     $$|A+B|\geq \max(\ell(A)+|B|,\ell(B)+|A|).$$
 \end{enumerate}
 \end{theorem}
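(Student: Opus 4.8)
The plan is to normalize first and then split into the two complementary regimes. Since $0\in A\cap B$, both sets have minimum $0$, so $\ell(A)=\max(A)$ and $\ell(B)=\max(B)$; I may assume throughout that $\ell(A)=\max(\ell(A),\ell(B))$, i.e. $A$ is the ``longer'' set, and the parameter $\delta_{A,B}$ then records exactly the boundary case $\ell(A)=\ell(B)$. The two statements cover complementary ranges of $\max(\ell(A),\ell(B))$ against $|A|+|B|$, so the result is a Freiman--Lev--Stanchescu-type dichotomy: when the longer set is spread out a counting argument produces many ``new'' sums, while when both sets are dense the sumset nearly fills the interval $[0,\ell(A)+\ell(B)]$.

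For part (2), the dense case, the natural device is hole counting. Writing $h_X=\ell(X)+1-|X|$ for the number of holes of a set $X$ with $\min X=0$, and using $\ell(A+B)=\ell(A)+\ell(B)$, the inequality $|A+B|\geq \max(\ell(A)+|B|,\ell(B)+|A|)$ is equivalent to $h_{A+B}\leq \min(h_A,h_B)$. I would establish $h_{A+B}\leq h_B$, the other bound following by symmetry, by constructing an injection from the holes of $A+B$ into the holes of $B$: a gap $c\notin A+B$ with $0<c<\ell(A)+\ell(B)$ forces $c-x\notin B$ for every $x\in A$, and the density hypothesis $\max(\ell(A),\ell(B))\leq |A|+|B|-2-\delta_{A,B}$ guarantees that consecutive translates $A+b$ overlap enough to pin each such $c$ to a distinct hole of $B$. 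The term $\delta_{A,B}$ enters precisely to absorb the boundary overlap that occurs when $\ell(A)=\ell(B)$.

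For part (1), the spread-out case, I would induct on $|B|$. The base case $B=\{0\}$ gives $|A+B|=|A|$, matching $|A|+2|B|-2-\delta_{A,B}$ since $\delta_{A,B}=0$ there. For the inductive step, delete the largest element $b^{*}=\ell(B)$ of $B$ to obtain $B'$; the translate $A+b^{*}$ contributes its top point $\ell(A)+b^{*}$, which exceeds everything in $A+B'$, so at least one genuinely new sum appears. The role of the hypotheses $\ell(A)\geq |A|+|B|-1-\delta_{A,B}$ and $d(A)=1$ is to force a \emph{second} new sum at each step: if $A+b^{*}$ contributed only its top element, then $A$ would be squeezed into a short arithmetic progression, contradicting either the primitivity $d(A)=1$ or the large diameter. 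Accumulating two new elements over the $|B|-1$ steps, with a single $\delta_{A,B}$ correction at the equal-length boundary, yields $|A+B|\geq |A|+2(|B|-1)-\delta_{A,B}=|A|+2|B|-2-\delta_{A,B}$.

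I expect the main obstacle to be the two rigidity arguments that supply the extra element in each regime: the injection into the holes of $B$ in part (2), and the combined use of $d(A)=1$ and the diameter bound to guarantee the second new sum in part (1). Both are where the exact constants and the $\delta_{A,B}$ correction must be tracked carefully, and handling the equal-length boundary $\ell(A)=\ell(B)$ correctly is the delicate bookkeeping on which the sharpness of the whole statement depends.
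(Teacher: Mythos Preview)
The paper does not prove Theorem~\ref{LSS}: it is quoted from \cite{LS} and \cite{S} as an external tool and is invoked without argument inside the proof of Lemma~\ref{l1}. There is therefore no proof in the paper against which to compare your proposal.

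Judged on its own terms, your plan for part~(2) is sound: the identity $\ell(A+B)=\ell(A)+\ell(B)$ makes the desired inequality literally equivalent to $h_{A+B}\le\min(h_A,h_B)$, and this hole-counting viewpoint is exactly how the dense case is handled in the sources. Your plan for part~(1), however, has a genuine gap. The induction on $|B|$ rests on the claim that adjoining $b^{*}=\max(B)$ always contributes at least two sums not already in $A+B'$, and this claim is false. Take $A=\{0,1,5\}$ (so $d(A)=1$, $\ell(A)=5$) and $B=\{0,3,4\}$; then $\ell(A)=5=|A|+|B|-1$, so the hypothesis of part~(1) holds with $\delta_{A,B}=0$. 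With $B'=\{0,3\}$ one has $A+B'=\{0,1,3,4,5,8\}$ and $A+b^{*}=A+4=\{4,5,9\}$, so the only new element is $9$. Your slogan that otherwise ``$A$ would be squeezed into a short arithmetic progression'' does not apply here: $A$ is neither short nor close to a progression. The theorem is still true for this example because the \emph{previous} step (passing from $\{0\}$ to $\{0,3\}$) overshoots, contributing three new elements; but that means the bookkeeping cannot be done one step at a time as you propose. The original Lev--Smeliansky argument avoids this by a global comparison of $A+B$ with suitable translates of $B$, bounding the total overlap in one stroke via the diameter hypothesis and $d(A)=1$, rather than by a stepwise induction.
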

 
\begin{theorem}[\cite{freiman}]\label{thm2}
Let $A$ be a finite subset of integers. Then the following statement hold:\\
(a) If $S=ba^A\subseteq BS(1,3),$ then $|S^2|\geq 4|S|-4$ and $|S^2|=4|S|-4$, iff either one of the following holds: $A=\{0,1,3\}$, $A=\{0,1,4\}$, $A=3\cdot \{0,1,\ldots,n\}\cup(3\cdot \{0,1,\ldots,n\}+1)$ or $A$ is affine transform of one of these sets.
\end{theorem}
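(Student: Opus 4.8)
The plan is to translate the group statement into a sum-of-dilates inequality and then prove that inequality through a residue analysis modulo $3$. Applying the multiplication rule recorded in the introduction with $r=p=1$ and $n=3$ gives $S^2=(ba^A)(ba^A)=b^2a^{3\cdot A+A}$, so that $|S^2|=|A+3\cdot A|$ and $|S|=|A|=:k$. Hence part (a) is equivalent to the dilate estimate $|A+3\cdot A|\ge 4k-4$ together with the determination of its equality cases. Since the affine map $x\mapsto\alpha x+\beta$ sends $A+3\cdot A$ to $\alpha(A+3\cdot A)+4\beta$, the quantity $|A+3\cdot A|$ is an affine invariant; I would therefore normalize $\min A=0$ and $d(A)=1$ from the start. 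This normalization also forces $c_3(A)=t\ge 2$ whenever $k\ge 2$, because all elements being congruent modulo $3$ would make $3\mid d(A)$.

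The first key step is the residue decomposition modulo $3$. Writing $A_\rho=A\cap(\rho+3\mathbb Z)$ and $C_\rho=\tfrac13(A_\rho-\rho)$, the sets $A_\rho+3\cdot A$ occupy distinct residue classes modulo $3$ and so are pairwise disjoint, while the identity $A_\rho+3\cdot A=\rho+3(C_\rho+A)$ gives $|A_\rho+3\cdot A|=|C_\rho+A|$. Consequently
\[
|A+3\cdot A|=\sum_{\rho}|C_\rho+A|.
\]
When $t=3$ the trivial bound $|C_\rho+A|\ge |C_\rho|+|A|-1=k_\rho+k-1$ already yields $|A+3\cdot A|\ge k+3(k-1)=4k-3>4k-4$, so the inequality is strict here and contributes no equality cases. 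The entire difficulty is concentrated in the case $t=2$, where the trivial bound only delivers $3k-2$.

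For $t=2$ I would sharpen each term $|C_i+A|$ by means of Theorem \ref{LSS}. Because $A\supseteq A_i=\rho_i+3\cdot C_i$ we have $\ell(A)\ge 3\ell(C_i)$, so $\ell(A)\neq\ell(C_i)$ unless both vanish; hence $\delta_{A,C_i}=0$, and in the regime $\ell(A)\ge k+k_i-1$ part (1) of Theorem \ref{LSS} (with $d(A)=1$) gives $|C_i+A|\ge k+2k_i-2$. Summing over the two classes yields exactly $\sum_i(k+2k_i-2)=4k-4$. The remaining dense regime, where $\ell(A)$ is small and part (2) of Theorem \ref{LSS} degrades to roughly $3k-2$, I would dispatch by a contradiction argument through Theorem \ref{thm1}: assuming $|A+3\cdot A|=3k-2+h\le 4k-5$ forces $h\le k-3$, so $|A+3\cdot A|\le 4k-4=(t+2)k-2t$ and Theorem \ref{thm1} places $A$ in an arithmetic progression of length at most $k+h\le 2k-3$, i.e. after normalization in an interval $[0,\ell]$ with $\ell\le 2k-4$ and at most $k-3$ holes. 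For such nearly complete intervals a direct covering computation contradicts the assumption: for a full interval one has $[0,m]+3\cdot[0,m]=[0,4m]$ for $m\ge 2$, whence $|A+3\cdot A|\ge 4\ell+1$ minus a controlled hole loss, which stays above $4k-4$. I expect this dense sub-case to be the main obstacle, since the generic sumset inequality is not tight there and the hole-counting must be bounded with care.

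Finally, for the equality statement I would trace equality back through the argument. Equality forces $t=2$ and forces every invoked inequality to be sharp, which by the equality conditions for $|X+Y|=|X|+|Y|-1$ pins down $C_1$ and $C_2$ as arithmetic progressions sharing a common difference with a prescribed relative alignment; undoing the substitutions $A_i=\rho_i+3\cdot C_i$ reconstitutes $A$. Enumerating the admissible configurations should leave exactly three possibilities: the two exceptional triples $\{0,1,3\}$ and $\{0,1,4\}$ arising from the unbalanced case $\min(k_1,k_2)=1$ at $k=3$, and the balanced family $3\cdot\{0,\dots,n\}\cup(3\cdot\{0,\dots,n\}+1)$ arising when $C_1$ and $C_2$ are equal-length intervals; the affine invariance noted at the outset then supplies the ``affine transform'' clause. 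The delicate point here is verifying that the sharpness conditions of Theorem \ref{LSS} and of the dense-case count are simultaneously satisfiable only by these three families, so that no spurious equality cases survive.
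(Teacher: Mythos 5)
You should first be aware that the paper contains no proof of this statement to compare against: Theorem~\ref{thm2} is imported verbatim from \cite{freiman}. Judged on its own terms, your opening is sound and matches the standard strategy: $|S^2|=|A+3\cdot A|$, the affine normalization $\min A=0$, $d(A)=1$, the disjoint residue decomposition $|A+3\cdot A|=\sum_\rho|C_\rho+A|$, the observation that $t=3$ already gives $4k-3$, and the use of Theorem~\ref{LSS}(1) when $A$ is spread out are all correct.

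The genuine gap is the $t=2$ dense case, which you flag as ``the main obstacle'' but do not close, and it is exactly where both the inequality and all of the equality cases live. Two concrete problems. First, the dichotomy is per residue class, not global: the hypothesis of Theorem~\ref{LSS}(1) is $\ell(A)\ge k+k_i-1$, which can hold for one class and fail for the other --- for $A=\{0,1,3\}$ one has $\ell(A)=3\ge k+k_2-1=3$ but $\ell(A)=3<k+k_1-1=4$ --- so ``summing over the two classes yields $4k-4$'' only covers the subcase $\ell(A)\ge k+\max(k_1,k_2)-1$, and every listed equality set lies outside it; consequently your equality analysis, which traces sharpness through the $|X|+|Y|-1$ bound in the part-(1) regime, is aimed at the wrong regime. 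Second, in the remaining regime your argument reduces, via Theorem~\ref{thm1}, to: $A\subseteq[0,\ell]$ with $\ell\le 2k-4$ and at most $k-3$ holes, hence $|A+3\cdot A|\ge 4\ell+1$ ``minus a controlled hole loss, which stays above $4k-4$.'' That clause is precisely the theorem restated for dense sets and is nowhere proved; you would need to show each hole of $A$ removes at most $4$ elements of $[0,4\ell]$ from $A+3\cdot A$, which is the real work (Theorem~\ref{LSS}(2) only yields $|A+3\cdot A|\ge 2\ell+k$, which falls below $4k-4$ as soon as $\ell<\frac{3k-4}{2}$, and the equality family $3\cdot\{0,\dots,n\}\cup(3\cdot\{0,\dots,n\}+1)$ sits exactly at $\ell=\frac{3k-4}{2}$). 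Note also that Theorem~\ref{thm1} is itself stated in this paper without proof or source, and inverse statements of that shape are normally derived from the direct bound you are trying to establish, so invoking it here at best relocates the difficulty and at worst is circular.
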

\begin{cor}[\cite{freiman}]\label{cor1}
Let $S$ be a finite subset of $BS(1,r)$ of size $k\geq 1$. Suppose $r\geq 3$ and $S=ba^{A},$ where $A$ is a finite set of integers. Then 
$$|S^2|=|A+r\cdot A|\geq \max(4k-4,1)\geq 3k-2.$$ 
\end{cor}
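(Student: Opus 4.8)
The plan is to first convert the group-theoretic statement into a purely additive one, and then prove the additive lower bound by splitting $A+r\cdot A$ along residue classes modulo $r$. Writing $S=ba^{A}$ and squaring with the multiplication rule recorded above (here the $b$-exponent is $1$ and the dilation factor is $r$) gives $S^{2}=b^{2}a^{\,r\cdot A+A}$, so $|S^{2}|=|A+r\cdot A|$; this already yields $|S^{2}|\ge 1$ since $A\neq\varnothing$, disposing of $k=1$. Moreover $\max(4k-4,1)\ge 3k-2$ for every $k\ge 1$ (the two sides agree at $k=1$, and $4k-4\ge 3k-2\iff k\ge 2$), so the whole statement reduces to proving $|A+r\cdot A|\ge 4k-4$ for $k\ge 2$, $r\ge 3$.

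Before counting I would normalize $A$. Replacing $A$ by $A^{*}=\tfrac1d(A-\min A)$ with $d=d(A)$ is an affine change of variables, and since $A+r\cdot A=(r+1)\min A+d\,(A^{*}+r\cdot A^{*})$ it preserves $|A+r\cdot A|$; thus I may assume $0=\min A$ and $d(A)=1$. The key observation is that every element of $r\cdot A$ is divisible by $r$, so the residue of $a+ra'$ modulo $r$ equals that of $a$. Hence $A+r\cdot A$ splits as a disjoint union over the $t=c_{r}(A)$ residue classes met by $A$: if $A_{\rho}$ is the part of $A$ in class $\rho$ and $B_{\rho}=\tfrac1r(A_{\rho}-\rho)$, then the class-$\rho$ slice of $A+r\cdot A$ is an affine copy of $B_{\rho}+A$, whence $|A+r\cdot A|=\sum_{\rho}|B_{\rho}+A|\ge\sum_{\rho}\big(|A_{\rho}|+k-1\big)=k+t(k-1)$ by the Cauchy--Davenport bound $|X+A|\ge|X|+|A|-1$. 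Because $d(A)=1$ and $r\ge 2$, the elements of $A$ cannot all be congruent modulo $r$, so $t\ge 2$; and if $t\ge 3$ this already gives $|A+r\cdot A|\ge 4k-3$, finishing that case.

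The crux is therefore the two-class case $t=2$, where the crude bound only delivers $3k-2$ and I must recover the missing $k-2$. I would exploit the fact that having exactly two residue classes modulo $r$ while $d(A)=1$ forbids $A$ from being a short arithmetic progression and forces $\ell(A)$ to be comparatively large (the extremal configuration $\{0,1,3,4,\dots,3n,3n+1\}$ of Theorem~\ref{thm2}, with $\ell\approx\tfrac32 k$, is exactly of this type). Writing $A=A_{1}\sqcup A_{2}$ with $|A_{i}|=k_{i}$, I would estimate $|B_{1}+A|$ and $|B_{2}+A|$ using the length-sensitive inequalities of Theorem~\ref{LSS} rather than Cauchy--Davenport: in the spread regime, where $\ell(A)$ is large, part~$1$ of that theorem replaces one $+k_{i}$ by $+2k_{i}$, while in the dense regime part~$2$ contributes $\ell(A)+k_{i}$; in either regime the two slices together should reach $4k-4$. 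For $r=3$ this reproduces the direct bound of Theorem~\ref{thm2} (and is governed by the inverse statement Theorem~\ref{thm1}), and for $r\ge 4$ the larger arithmetic gaps only increase the separation between dilated translates, so the same estimates go through \emph{a fortiori}.

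The step I expect to be genuinely delicate is this final two-class analysis: making the dichotomy between the spread and dense regimes precise, checking that the bookkeeping in $k_{1},k_{2}$ closes to exactly $4k-4$ (the $\delta$-terms in Theorem~\ref{LSS} threaten to cost a unit or two and must be tracked carefully), and verifying that the hypotheses of Theorem~\ref{LSS} (the normalizations $0\in B_{i}\cap A$, the condition $d(A)=1$, and the length comparison through $\delta$) are actually met after the residue reduction. Everything else---the identity $|S^{2}|=|A+r\cdot A|$, the normalization, and the residue splitting giving the bound for $t\ge 3$---is routine.
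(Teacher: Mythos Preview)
The paper does not supply a proof of this corollary; it is quoted directly from \cite{freiman} and used as a black-box tool throughout, so there is no argument in the paper to compare yours against.

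Regarding your proposal on its own terms: the identity $|S^{2}|=|A+r\cdot A|$, the affine normalization to $0\in A$ and $d(A)=1$, and the residue-class decomposition $|A+r\cdot A|=\sum_{\rho}|B_{\rho}+A|\ge k+t(k-1)$ are all correct, and this cleanly settles $k=1$ and the case $t=c_{r}(A)\ge 3$. The gap is exactly where you locate it, and it is real rather than cosmetic. A back-of-envelope check: pigeonhole on two residue classes modulo $r\ge 3$ in an interval of length $\ell(A)$ gives only $\ell(A)\ge\tfrac{r}{2}(k-2)\ge\tfrac{3}{2}(k-2)$, and feeding this into part~2 of Theorem~\ref{LSS} yields $|B_{1}+A|+|B_{2}+A|\ge 2\ell(A)+k\ge 4k-6$, still two short of $4k-4$. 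Recovering those last two units, and checking which regime of Theorem~\ref{LSS} actually applies (the hypothesis $\ell(A)\le k_{i}+k-2-\delta$ can fail when one $k_{i}$ is small, and part~1 requires $d=1$ on the longer of the two sets, which need not be $A$), requires the careful case-split you allude to but do not carry out. So the $t=2$ analysis is essentially the entire content of the corollary, not merely delicate bookkeeping, and as written your proposal leaves it open.
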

\begin{theorem}[\cite{freiman15}]\label{thm5}
For $n\geq 2,$ the group 
$BS(1,n)=\langle a,b|ab=ba^n\rangle $ is orderable.
\end{theorem}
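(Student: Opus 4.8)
\emph{Proof proposal.} The plan is to exhibit a bi-invariant total order on $BS(1,n)$ directly from its structure as a semidirect product, since orderability (left- or two-sided) then follows immediately. The key observation is that $BS(1,n)=\langle a,b\mid ab=ba^n\rangle$ is metabelian: the normal closure $N$ of $a$ consists of the elements $b^{-j}a^mb^{j}$, and the assignment $b^{-j}a^mb^{j}\mapsto m/n^{j}$ identifies $N$ with the additive group $\mathbb{Z}[1/n]\subseteq\mathbb{R}$, while $G/N\cong\mathbb{Z}=\langle b\rangle$. Thus $BS(1,n)\cong \mathbb{Z}[1/n]\rtimes_\phi \mathbb{Z}$, where the generator $b$ acts on $N$ by the automorphism $\phi$ corresponding to multiplication by $1/n$ (equivalently, conjugation by $b^{-1}$ multiplies by $n$, matching $b^{-1}ab=a^n$).

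First I would record that each factor is bi-orderable and that the action respects the order. The normal subgroup $N\cong\mathbb{Z}[1/n]$ is abelian and torsion-free, so it carries the bi-order inherited from the usual order on $\mathbb{R}$; the quotient $\mathbb{Z}$ is bi-ordered in the standard way. Crucially, since $n\geq 2>0$, multiplication by $n$ (hence by any power $n^{j}$) is an order-preserving automorphism of $(\mathbb{Z}[1/n],<)$. I would then invoke the standard extension principle: if $1\to N\to G\to Q\to 1$ is exact with $N,Q$ bi-ordered and the conjugation action of $G$ on $N$ preserves the order of $N$, then the lexicographic rule---declare $g>1$ iff its image in $Q$ is positive, or that image is trivial and the $N$-component of $g$ is positive---defines a bi-invariant total order on $G$. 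Verifying that this rule is total, transitive, and invariant under both left and right multiplication is the technical heart, and it uses precisely the order-preservation of $\phi$ established above.

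An alternative, essentially equivalent route avoids the extension lemma by embedding $BS(1,n)$ faithfully into the orientation-preserving affine group $\mathrm{Aff}^{+}(\mathbb{R})=\{x\mapsto \alpha x+\beta:\alpha>0\}$ via $a\mapsto (x\mapsto x+1)$ and $b\mapsto (x\mapsto x/n)$; one checks directly that these maps satisfy $ab=ba^n$ and generate a copy of $\mathbb{Z}[1/n]\rtimes\mathbb{Z}$. Since $\mathrm{Aff}^{+}(\mathbb{R})\cong\mathbb{R}\rtimes\mathbb{R}_{>0}$ is itself bi-orderable (the scaling action by positive reals preserves the order on the translation part), and bi-orderability passes to subgroups, $BS(1,n)$ inherits a two-sided order.

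The main obstacle I anticipate is not any single hard estimate but rather the care needed at two points: (i) confirming that $N$ is genuinely $\mathbb{Z}[1/n]$, i.e.\ that the normal-form map $b^{-j}a^mb^j\mapsto m/n^j$ is well defined and injective (this is where torsion-freeness of the base, and hence the very possibility of a bi-order, is secured), and (ii) checking two-sided invariance of the lexicographic order, which would fail for negative $n$ and therefore must visibly use the hypothesis $n\geq 2$. Once positivity of the action is in hand, the remaining verifications are routine.
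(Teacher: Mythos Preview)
Your argument is correct: identifying $BS(1,n)$ with $\mathbb{Z}[1/n]\rtimes\mathbb{Z}$, observing that the conjugation action is multiplication by a positive rational and hence order-preserving, and then taking the lexicographic order is the standard proof that these groups are bi-orderable. The alternative embedding into $\mathrm{Aff}^{+}(\mathbb{R})$ is equally valid, and your caveats about checking well-definedness of the map $b^{-j}a^{m}b^{j}\mapsto m/n^{j}$ and about where the hypothesis $n\geq 2$ (positivity of the action) enters are exactly the right points of care.

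That said, there is nothing in the paper to compare your proof against. Theorem~\ref{thm5} is not proved in this paper at all: it is quoted verbatim from \cite{freiman15} as a background result and carries no argument here. The paper only invokes it later (in Lemma~\ref{l4} and in Case~4 of the main theorem) to use the consequence that in an orderable group $s_1s_2^{2}=s_2^{2}s_1$ forces $s_1s_2=s_2s_1$. So your proposal is a sound, self-contained proof of the cited fact, but it neither agrees with nor differs from a proof in the present paper, because none is given.
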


\section{Some Important Lemmas}
The following lemmas will be used to prove Theorem \ref{thm3}
\begin{lemma}\label{l1}
Let $S\subset BS^{+}(1,3)$ be a finite set of size $k.$  Then the followings hold:\\
(a) For $t\geq 1$ and $1\leq j\leq t$ such that $k_j=|S_j|\geq 2$, then the set $S$ generates a non abelian group.\\
(b) For $k=|S|\geq 4$, $S=U\cup V$ where $U=b^ma^M$ and $V=b^na^N$,  $0\leq m<n$ are two integers and $M$, $N\subseteq \mathbb{Z}$, then $|S^2|\geq \frac{7}{2}|S|-6.$\\
\end{lemma}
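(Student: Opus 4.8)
For this part I would argue directly that the hypothesis forces two non-commuting elements. Since $t\geq 1$, the set $S$ meets at least two cosets of $\langle a\rangle$, and the elements of $S_j$ with $j\geq 1$ lie in a coset $b^{r}\langle a\rangle$ whose exponent satisfies $r\geq 1$ (it exceeds the exponent of $S_0$). Choosing two distinct elements $u=b^{r}a^{x}$ and $w=b^{r}a^{y}$ of $S_j$ (so $x\neq y$) and using the multiplication rule, one gets
\[
uw=b^{2r}a^{3^{r}x+y},\qquad wu=b^{2r}a^{3^{r}y+x},
\]
so $uw=wu$ would force $(3^{r}-1)(x-y)=0$. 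As $r\geq 1$ gives $3^{r}-1\neq 0$ and $x\neq y$, we conclude $uw\neq wu$, hence $S$ generates a non-abelian group.

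\textbf{Part (b).} The plan is to split $S^{2}$ along the cosets of $\langle a\rangle$ and estimate each piece. Writing $p=|M|$, $q=|N|$ and $k=p+q$, the multiplication rule gives
\begin{align*}
U^{2}&=b^{2m}a^{M+3^{m}\cdot M}, & V^{2}&=b^{2n}a^{N+3^{n}\cdot N},\\
UV&=b^{m+n}a^{3^{n}\cdot M+N}, & VU&=b^{m+n}a^{M+3^{m}\cdot N}.
\end{align*}
Because $m<n$ forces $2m<m+n<2n$, the three blocks lie in distinct cosets, so
\[
|S^{2}|=|M+3^{m}\cdot M|+\bigl|(3^{n}\cdot M+N)\cup(M+3^{m}\cdot N)\bigr|+|N+3^{n}\cdot N|.
\]
I would bound the outer terms via Corollary \ref{cor1}: since $n\geq 1$ we have $|N+3^{n}\cdot N|\geq 4q-4$, and if $m\geq 1$ likewise $|M+3^{m}\cdot M|\geq 4p-4$. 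For the middle term I would reduce modulo $3^{n}$: grouping $N$ into its residue classes yields $|3^{n}\cdot M+N|\geq q+c_{3^{n}}(N)(p-1)$, so the middle block is $\geq p+q-1$ always and $\geq 2p+q-2$ whenever $N$ meets at least two classes mod $3^{n}$.

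These estimates dispose of most of the range. If $m\geq 1$ the three bounds sum to $5k-9\geq\frac{7}{2}k-6$. If $m=0$ then only $|M+M|\geq 2p-1$ is available for the first block; combined with $|V^2|\geq 4q-4$ and the middle bound this gives $4p+5q-7$ when $c_{3^{n}}(N)\geq 2$, and $3p+5q-6$ otherwise, and both exceed $\frac{7}{2}k-6$ for $k\geq 4$ except in the second case when $p>3q$.

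The crux is therefore $m=0$, $c_{3^{n}}(N)=1$, $p>3q$, where $N$ is trapped in a single class mod $3^{n}$ and the coarse bounds are too weak. Here I would use two features. First, if $M$ is \emph{long}, i.e. $\ell(M)$ is comparable to $|M|$, then Theorem \ref{LSS} upgrades $|M+M|$ from $2p-1$ to roughly $3p-3$, which already suffices. Second, a dense progression $M$ is forced to spread over many residues mod $3^{n}$ unless its common difference is divisible by $3$; in the residual concentrated situation both $3^{n}\cdot M+N$ and $M+3^{0}\cdot N$ occupy the \emph{same} class mod $3^{n}$, so conjugating $S$ by a power of $b$ — equivalently invoking $a^{3}=b^{-1}ab$ to strip a common factor of $3$ from all exponents — preserves $|S^{2}|$, preserves $p$ and $q$, and strictly shrinks the exponents. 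This produces a descent that must terminate in an already-settled case (either $c_{3^{n}}(N)\geq 2$ reappears after the shift, or $M$ ceases to be divisible by $3$, reactivating the disjointness bound for the middle block). Making this descent precise — tracking the residue-$0$ part of $M$ and its interaction with $N$ at each step, and checking the small cases $q=1$ and $q=2$ separately — is the main obstacle, and I expect it to require the disjointness bookkeeping sketched above together with Theorem \ref{LSS}.
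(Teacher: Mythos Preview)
Your Part (a) is correct and is exactly the paper's argument for $j\geq 1$ (the paper also writes out a $j=0$ case, which is outside the stated hypothesis anyway).

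For Part (b), your coset decomposition, the $m\geq 1$ case via $5k-9$, and the long-$M$ subcase of $m=0$ via Theorem~\ref{LSS} giving $|M+M|\geq 3|M|-3$ all coincide with the paper. The divergence, and the genuine gap, is in your ``crux'' case $m=0$, $c_{3^{n}}(N)=1$, $p>3q$ with $M$ short.

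Your descent does not work as described. Conjugation by $b$ in $BS(1,3)$ sends $a^{x}\mapsto a^{x/3}$ only when $3\mid x$, so stripping a factor of $3$ from the $a$-exponents of $S$ requires every element of $M$ and of $N$ to be divisible by $3$; the hypothesis $c_{3^{n}}(N)=1$ gives no such divisibility (it only fixes a residue class, not the zero class), and it says nothing whatsoever about $M$. Already the case $q=1$ makes $c_{3^{n}}(N)=1$ automatic while $M$ is an arbitrary $p$-element set with $p\geq 4$, so there is in general nothing to descend on and the argument stalls immediately.

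The paper handles this subcase by a different and much simpler device that you are missing: rewrite the middle block as $(M\cup 3^{n}\cdot M)+N$ and bound the \emph{overlap} $|M\cap 3^{n}\cdot M|$. All common elements lie in $[\min M,\max M]$, while $3^{n}\cdot M$ sits inside an arithmetic progression of step $3^{n}d\geq 3d$, so $|M\cap 3^{n}\cdot M|\leq \ell^{*}/3+1$ and hence $|M\cup 3^{n}\cdot M|\geq 2|M|-\ell^{*}/3-1$. Combining this with $|M+M|\geq |M|+\ell^{*}$ from Theorem~\ref{LSS}(2), the two $\ell^{*}$-contributions enter with net coefficient $1-\tfrac{1}{3}=\tfrac{2}{3}>0$, and the trivial inequality $\ell^{*}\geq |M|-1$ then closes the case directly, with no residue bookkeeping for $N$ and no descent.
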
 
\begin{proof}
(a) We will divide the proof of lemma in two cases based on the value of $j.$ 
 \noindent {\bf{Case 1:}} Initially, we assume that $j=0.$\\
If $m_0=0$, then $k_0=|S_0|=|A_0|\geq 2$ implying  $S_0\neq \{1\}$ and $A_0\neq \{0\}.$ Since $t\geq 1,$ there exist three integers $m,x,z$ such that $m\geq 1$, $x\neq 0$, $a^x\in S_0$ and $b^ma^z\in S_1.$
Since $a^x(b^ma^z)=b^ma^{z+3^mx}\neq (b^ma^z)a^x=b^ma^{z+x}.$
Thus, $S$ generates a non-abelian group.\\
 If $m_0\geq 1$, then $k_0=|S_0|=|b^{m_0}a^{A_0}|\geq 2$. Thus $|A_0|\geq 2.$  So there are two integers $x\neq y$ such that $\{b^{m_0}a^x,b^{m_0}a^y\}\subseteq S_0.$ Hence $(b^{m_0}a^x)(b^{m_0}a^y)=b^{m_0}a^{y+3^{m_0}x}\neq (b^{m_0}a^y)(b^{m_0}a^x)=b^{2m_0}a^{x+3^{m_0}y},$ because  $x\neq y$ and $m_0\geq 1.$

\noindent {\bf{Case 2:}} Next, let's consider the case where $j\geq 1.$\\
Since  $j\geq 1$, therefore $m_j\geq 1$ and $k_j=|S_j|=|b^{m_j}a^{A_j}|\geq 2$ implies that $|A_j|\geq 2.$  Consequently, there exist two integers $x\neq y$ such that $\{b^{m_j}a^x,b^{m_j}a^y\}\subseteq S_j.$ Hence $$(b^{m_j}a^x)(b^{m_j}a^y)=b^{m_j}a^{y+3^{m_j}x}\neq (b^{m_j}a^y)(b^{m_j}a^x)=b^{2m_j}a^{x+3^{m_j}y},$$ as $x\neq y$ and $m_j\geq 1.$ This completes the proof.\\

\noindent (b) Let $S=U\cup V$. Observe that $U\cap V=\phi$ implies $|S|=|U|+|V|.$ Also, we have $S^2=U^2\cup (UV\cup VU)\cup V^2$, where $U^2=b^{2m} a^{M+3^m\cdot M}$, $V^2=b^{2n} a^{N+3^n\cdot N},$ 
$UV=(a^M b)(a^N b)=b^{m+n}a^{N+3^n\cdot M}$, $VU=(a^N b)(a^M b)=b^{m+n}a^{M+3^m\cdot N}.$ Thus 
$|U^2|=|M+3^m\cdot M|\geq 4|M|-4$, $|V^2|=|N+3^n\cdot N|\geq 4|N|-4$.\\
$|UV|=|M+N|\geq |M|+|N|-1$ and  $|VU|=|N+M|\geq |N|+|M|-1.$\\
 Since  the sets $b^{2m}a^{\mathbb{Z}}$, $b^{2n}a^{\mathbb{Z}}$ and $b^{m+n}a^{\mathbb{Z}}$ are pairwise disjoint,  we can express
 $$|S^2|=|U^2|+|UV\cup VU|+|V^2|.$$ 
\noindent Now, let's consider two cases:

\noindent {\bf{Case 1:}} First, assume that $m=0.$\\
Let $S=U\cup V$, where $U=a^M$, $V=b^na^N$, and $n\geq 1.$ Additionally, $S^2=U^2\cup (UV\cup VU)\cup V^2.$ To determine $|S^2|$, we must compute $|U^2|$, $|V^2|$, and $|UV\cup VU|$.\ We have $U^2=a^{M+M}$, $V^2=b^{2n}a^{N+3^n\cdot N}$, $UV=b^na^{N+3^n\cdot M}$, and $VU=b^na^{M+N}.$ Therefore, $|U^2|=|M+M|$ and, by Corollary \ref{cor1}, $|V^2|=|N+3^n\cdot N|\geq 4|N|-4$. Also,
\begin{align*} |UV\cup VU|
&=|(N+3^n\cdot M)\cup (M+N)|=|(M\cup 3^n\cdot M)+N|\\
&\geq |(M\cup 3^n\cdot M)|+|N|-1\geq |M|+|N|-1.
\end{align*}
If $|M|=1$ then, 
\begin{align*}
|S^2|&=|U^2|+|UV\cup VU|+|V^2|\\
&\geq 1+(1+|N|-1)+(4|N|-4)=5|N|-3\\
&\geq 4.5(1+|N|)-7=4.5|S|-7\geq \frac{7}{2}k-6.  
\end{align*}

\noindent Next assume that $|M|\geq 2.$\\
Now, consider the case where $\ell(M^{*})\geq 2|M^{*}|-2.$\\
 Suppose that, $|M|=2.$ Then $M=\{a_0<a_1\},$ which implies that $d(M)=a_1-a_0$ and $M^{*}=\{0,1\}.$ Thus $\ell(M^{*})=1$ and $1=\ell(M^{*})\geq 2|M^{*}|-2=2,$ which is a contradiction. Hence $|M|\geq 3.$ Therefore $k=|M|+|N|\geq 3+1=4.$\\
 Since $d(M^{*})=1$, using Theorem \ref{LSS}, we have
 $$|U^2|=|M+M|=|M^{*}+M^{*}|\geq 3|M^{*}|-3=3|M|-3.$$ 
 Hence
 \begin{align*}
 |S^2|&=|U^2|+|UV\cup VU|+|V^2|\\
 &\geq (3|M|-3)+|M|+|N|-1+(4|N|-4)\\
 &=4k+|N|-8\geq 4k-8\geq \frac{7}{2}k-6.
 \end{align*} 
 Finally consider the case where $\ell(M^{*})< 2|M^{*}|-2.$\\
By Theorem \ref{LSS}, we can assume that $h_{M^{*}}=\ell^{*}+1-|M^{*}|$, therefore
 $$|U^2|=|M+M|=|M^{*}+M^{*}|\geq 2|M^{*}|-1+h_{M^{*}}=|M^{*}|+\ell^{*}=|M|+\ell^{*}.$$
Next, we estimate the size of $M\cap 3^n\cdot M.$ Note that all the common elements of $3^n\cdot M$ and $M$ lie in the interval $[\min(M),\max(M)]$ of length $\ell$ and the set $3^n\cdot M$ is included in an arithmetic progression of difference $3^nd>3d$. Hence     
$$|M\cap (3^n\cdot M)|\leq \frac{\ell}{3d}+1=\frac{\ell^{*}}{3}+1.$$
Therefore 
$$|M\cup (3^n\cdot M)|=|M|+|3^n\cdot M|-|M\cap (3^n\cdot M)|\geq 2|M|-\frac{\ell^{*}}{3}-1.$$
Hence 
\begin{align*}
 |S^2|
 &=|U^2|+|UV\cup VU|+|V^2|\\
 &\geq |M+M|+(|M\cup (3^n\cdot M)|+|N|-1)+(|N+3^n\cdot N|)\\
 &\geq (|M|+\ell^{*})+(2|M|-\frac{\ell^{*}}{3}-1+|N|-1)+4|N|-4\\
 &=3|M|+5|N|-6+\frac{2\ell^{*}}{3}\\
&\geq 3|M|+5|N|-6+\frac{|M^{*}|-1}{3}=\frac{10}{3}|M|+5|N|-\frac{19}{3}\geq \frac{7}{2}k-6.
 \end{align*}

\noindent  {\bf{Case 2}}: Next suppose that $m\geq 1.$\\
 Here $|U^2|=|M+3^m\cdot M|\geq 4|M|-4$, $|V^2|=|N+3^n\cdot N|\geq 4|N|-4.$ Also $|UV|=|N+3^m\cdot M|\geq |M|+|N|-1.$ Hence $|S^2|=|U^2|+|UV|+|V^2|\geq 4|M|-4+|M|+|N|-1+4|N|-4=5k-9\geq \frac{7}{2}k-6.$ 
This completes the proof.
\end{proof}
\begin{lemma}\label{l2}
Let $S\subseteq BS(1,3)$ be a finite set of size $k.$ Suppose $S=S_0\cup S_1\cup\cdots\cup S_t$; $t\geq 2.$ Then the followings hold:\\
(a) If $k_0=|S_0|\geq 2$ and $k_i=|S_i|=1$, for $1\leq i\leq t,$ then $|S^2|\geq \frac{7}{2}k-6.$\\
(b) If $k_t=|S_t|\geq 2$ and $k_i=|S_i|=1$, for $1\leq i\leq t-1,$ then $|S^2|\geq \frac{7}{2}k-6.$ 
\end{lemma}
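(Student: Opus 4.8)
The plan is to decompose $S^2$ according to the $b$-exponent, i.e. into the pairwise disjoint cosets $b^{\ell}\langle a\rangle$, and to bound the number of $a$-exponents appearing in each coset separately. Write $S_0=b^{m_0}a^{A_0}$ with $|A_0|=k_0\ge 2$ and $S_i=\{b^{m_i}a^{c_i}\}$ for the singleton blocks, where $m_0<m_1<\cdots<m_t$ and $k=k_0+t$; set $P=\{m_0,m_1,\dots,m_t\}$. Since $(b^{p}a^{M})(b^{q}a^{N})=b^{p+q}a^{3^{q}M+N}$, every product lands in the coset indexed by the sum of the two $b$-exponents, so the supporting levels are exactly $P+P$ and $|S^2|=\sum_{\ell\in P+P}|(S^2)_\ell|$, where $(S^2)_\ell$ denotes the set of $a$-exponents at level $\ell$. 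It therefore suffices to lower bound the contribution of a well-chosen family of disjoint levels.

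For part (a) I would isolate three groups of levels. First, level $2m_0$ carries $S_0^2=b^{2m_0}a^{A_0+3^{m_0}A_0}$, contributing at least $4k_0-4$ by Corollary \ref{cor1} when $m_0\ge 1$, and at least $2k_0-1$ (the elementary sumset bound) when $m_0=0$. Second, for each $i=1,\dots,t$ the level $m_0+m_i$ carries both $S_0 s_i=b^{m_0+m_i}a^{3^{m_i}A_0+c_i}$ and $s_i S_0=b^{m_0+m_i}a^{A_0+3^{m_0}c_i}$. The crucial observation is that the first is a $3^{m_i}$-dilate of $A_0$ while the second is a plain translate of $A_0$; since $m_i\ge 1$ and $\ell(A_0)\ge 1$, the dilate has strictly larger diameter, so these two $k_0$-element sets are distinct and $|S_0 s_i\cup s_i S_0|\ge k_0+1$. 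These $t$ levels are distinct from one another and from $2m_0$. Third, by $|P+P|\ge 2|P|-1=2t+1$, at least $t$ further levels of $P+P$ remain, each contributing at least $1$. Summing gives $|S^2|\ge(4k_0-4)+t(k_0+1)+t=(t+4)k_0+2t-4$ (respectively $(t+2)k_0+2t-1$ when $m_0=0$), and a short computation shows each exceeds $\tfrac72(k_0+t)-6$ for all $k_0\ge 2$, $t\ge 2$.

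Part (b) is the mirror image, with the top block $S_t=b^{m_t}a^{A_t}$, $|A_t|=k_t\ge 2$, playing the role of $S_0$: level $2m_t$ carries $S_t^2$ (at least $4k_t-4$, as $m_t\ge 1$), each level $m_t+m_i$ carries the union of the dilate $S_t s_i$ and the translate $s_i S_t$ (at least $k_t+1$ whenever the lower block sits at a positive level), and the sumset bound on $P$ again produces enough residual levels. The one point requiring care is a block at level $m_i=0$: there both $S_t s_i$ and $s_i S_t$ are plain translates of $A_t$, so the gain $k_t+1$ is only guaranteed when $c_i\ne 0$; the single exceptional case $c_i=0$ (the identity element) costs one unit, which the slack in the final estimate comfortably absorbs.

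The main obstacle is the second group of levels in part (a): a naive bound of $k_0$ per level $m_0+m_i$ already fails the target near $k_0=2$ for large $t$, so the dilate-versus-translate argument yielding the extra $+1$ on each of the $t$ levels is essential, as is retaining the residual $t$ levels of $P+P$ in the count. Securing these two linear-in-$t$ gains simultaneously, while correctly handling the weaker $S_0^2$ estimate in the abelian case $m_0=0$ and the identity-element exception in part (b), is where the real work lies; the remaining inequalities are routine.
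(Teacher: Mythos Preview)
Your proposal is correct and follows essentially the same strategy as the paper: decompose $S^2$ by $b$-level, prove $|S_0S_i\cup S_iS_0|\ge k_0+1$ (resp.\ $|S_tS_i\cup S_iS_t|\ge k_t+1$) by showing the two factor sets differ, and pick up $t$ additional levels from $P+P$ to complete the count. Your diameter comparison for distinctness and your appeal to $|P+P|\ge 2t+1$ are tidier than the paper's elementwise computation and its explicit listing of the residual products $S_1S_t,\dots,S_tS_t$ (resp.\ $S_0S_0,\dots,S_0S_{t-1}$), but the underlying argument is the same.
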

\begin{proof}
(a) Let
$A_0=\{y_1<y_2<\cdots<y_{k_0}\}\subset \mathbb{Z}$ be a finite set of $k_0$ integers. Define the set $S_0=a^{A_0}=\{a^{y_1},a^{y_2},\ldots,a^{y_{k_0}}\}$ 
with $k_0\geq 2,$ and let $S_i=\{b^{m_i}a^{x_i}\}$; $1\leq i\leq t.$ \\
Now observe that $k\geq 4$ and for every $1\leq i\leq t$, $m_i\geq 0$,\\
$S_0S_i=b^{m_0+m_i}\{a^{x_i+3^{m_i}y_1},\ldots,a^{x_i+3^{m_i}y_{k_0}}\}$, $|S_0S_i|=k_0$,\\
$S_iS_0=b^{m_i+m_0}\{a^{y_1+3^{m_0}x_i},\ldots,a^{y_{k_0}+3^{m_0}x_i}\}$, $|S_iS_0|=k_0.$\\
We claim that $|S_0S_i\cup S_iS_0|\geq k_0+1.$\\
 Firstly, we will show that $S_0S_i\neq S_iS_0.$ If $S_0S_i=S_iS_0$, then $\{x_i+3^{m_i}y_1<\cdots<x_i+3^{m_i}y_{k_0}\}=
\{y_1+3^{m_0}x_i<\cdots<y_{k_0}+3^{m_0}x_i\}$ and thus $(3^{m_0}-1)x_i=(3^{m_i}-1)y_1=\cdots=(3^{m_i}-1)y_{k_0}$ which contradicts  $\{y_0<\cdots<y_{k_0}\}$ as $m_i\geq 1$ and $k_0\geq 2.$ Therefore, $S_0S_i\neq S_iS_0.$ 
 Note that
$S_0S_i\cup S_iS_0\subseteq b^{m_0+m_i}a^{\mathbb{Z}}$, $S_iS_t\subseteq b^{m_i+m_t}a^{\mathbb{Z}}$ for every $0\leq i\leq t.$ Moreover $S_0S_0=b^{2m_0}a^{A_0+3^{m_0}\cdot A_0}$, $|S_0S_0|=|A_0+3^{m_0}\cdot A_0|\geq 2|A_0|-1=2k_0-1.$  Note that the sets
 $S_0S_0$, $S_0S_1\cup S_1S_0$, $\ldots$, $S_0S_t\cup S_tS_0$, $S_1S_t,\ldots,S_tS_t$ are disjoint and included in $S^2.$ Hence
\begin{align*}
 |S^2|
 &\geq (|S_0S_0|)+(|S_0S_1\cup S_1S_0|) \cdots (|S_0S_t\cup S_tS_0|)+(|S_1S_t|+\cdots+|S_tS_t|)\\
 &\geq (2k_0-1)+(k_0+1)+\cdots+(k_0+1)+(1+1+\cdots+1)\\
 &=(2k_0-1)+t(k_0+1)+t\\
 &=4k_0+(t-2)k_0+2t-1\\
 &\geq 4k_0+2(t-2)+2t-1=4k_0+4t-5\\
 &=4k-5\geq 3k-5\geq \frac{7}{2}|S|-4\geq \frac{7}{2}|S|-6.  
 \end{align*}
 Hence $|S^2|\geq \frac{7}{2}|S|-6.$\\
(b) Let $A_t=\{y_1<\cdots<y_{k_t}\}\subseteq\mathbb{Z}$ be a finite set of $k_t\geq 2$ integers and let $S_i=b^{m_i}a^{x_i}$ for every $0\leq i\leq t-1.$ \\ Observe that
$S_t=b^{m_t}a^{A_t}=\{b^{m_t}a^{y_1},\ldots,b^{m_t}a^{y_{k_t}}\}$.
Note that for every $1\leq i\leq t-1$, we have $S_tS_i=b^{m_t+m_i}\{a^{x_i+3^{m_i}y_1},\ldots,a^{x_i+3^{m_i}y_{k_t}}\}$ and
$S_iS_t=b^{m_i+m_t}\{a^{y_1+3^{m_t}x_i},\ldots,\\ a^{y_{k_t}+3^{m_t}x_i}\}$. Thus for every $0\leq i\leq t-1,$ 
\begin{align}\label{eq6}
|S_tS_i\cup S_iS_t|\geq k_t+1
\end{align}
 for every $1\leq i\leq t-1.$ Also $|S_tS_t|\geq 4k_t-4$ and $|S_0S_t|=k_t\geq 2.$ 
\begin{align*}
|S^2|&\geq (|S_0S_0|+\cdots+|S_0S_t|)+(|S_1S_t\cup S_tS_1|+\cdots+|S_{t-1}S_t\cup S_tS_{t-1}|+|S_tS_t|)\\
&\geq (1+1+\cdots+1+k_t)+(t-1)(k_t+1)+4k_t-4\\
&= (t-1)+k_t+(t-1)k_t+(t-1)+4k_t-4\\
&=5k_t+(t-1)k_t+2t-5\\
&\geq 5k_t+4t-7\geq 4k-7\\
&\geq \frac{7}{2}|S|-6.
\end{align*}
\end{proof}

\begin{lemma}\label{l3}
Let $S\subseteq BS^{+}(1,3)$ be a finite non-abelian set of size $k=|S|\geq 2.$ Suppose that
 $S=S_0\cup S_1\cup\ldots\cup S_t$, where $|S_i|=1$ for all $i$ and $S_i=\{s_i\}=\{b^{m_i}a^{x_i}\}$ for $1\leq i\leq t$, $m_0<m_1<\cdots<m_t$ and $s_0=a^{x_0}.$  If the subgroup $\langle T=S\setminus \{s_0\}\rangle$ is abelian, then $|S^2|\geq 4k-4.$
\end{lemma}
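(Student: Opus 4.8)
The plan is to exploit the abelian hypothesis to pin down the exponents $x_i$ and then count $|S^2|$ coset by coset. Since each $S_i$ is a singleton, $k=t+1$, so the goal $|S^2|\ge 4k-4$ becomes $|S^2|\ge 4t$; note that $m_0=0$ and that $k\ge 2$ forces $t\ge 1$. First I would record the multiplication rule $s_i s_j = b^{m_i+m_j}a^{3^{m_j}x_i+x_j}$. The subgroup $\langle T\rangle$ is abelian precisely when its generators commute, i.e.\ $s_i s_j = s_j s_i$ for all $1\le i,j\le t$, which unwinds to $(3^{m_j}-1)x_i=(3^{m_i}-1)x_j$. Fixing $c=x_1/(3^{m_1}-1)$, this forces $x_i=c(3^{m_i}-1)$ for every $i\ge 1$. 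Separately, $s_0$ commutes with $s_\ell$ iff $(3^{m_\ell}-1)x_0=0$; since $m_\ell\ge 1$, the non-abelianness of $S$ is equivalent to $x_0\ne 0$, which I would assume throughout.

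The main step is a count of $S^2$ organized by the exponent of $b$ (equivalently, by the coset of $\langle a\rangle$, determined by the homomorphism $b^m a^x\mapsto m$). Writing $M^+=\{m_1,\dots,m_t\}$ and $P=M^++M^+$, the cosets meeting $S^2$ are $b^0$, the $b^{m_\ell}$ for $m_\ell\in M^+$, and the $b^\mu$ for $\mu\in P$. The key simplification is that, using $x_i=c(3^{m_i}-1)$, every product coming from $T$ collapses: for $i,j\ge 1$ one computes $3^{m_j}x_i+x_j=c(3^{m_i+m_j}-1)$, so all such products with a fixed sum $\mu=m_i+m_j$ give the single element $b^\mu a^{c(3^\mu-1)}$. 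Then I would tally the $a$-exponents in each coset: $b^0$ contains only $a^{2x_0}$ (one element); each coset $b^{m_\ell}$ receives the exponent $3^{m_\ell}x_0+x_\ell$ from $s_0 s_\ell$ and $x_\ell+x_0$ from $s_\ell s_0$, together with the collapsed value $x_\ell=c(3^{m_\ell}-1)$ whenever $m_\ell\in P$; and each coset $b^\mu$ with $\mu\in P\setminus M^+$ contributes only $b^\mu a^{c(3^\mu-1)}$.

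Checking that the three candidate exponents in a coset $b^{m_\ell}$ with $m_\ell\in M^+\cap P$ are pairwise distinct is the one piece of bookkeeping to carry out carefully, and it is the main obstacle; here they differ from one another by $(3^{m_\ell}-1)x_0$, $3^{m_\ell}x_0$, and $x_0$, all nonzero precisely because $x_0\ne 0$, so each such coset holds exactly $3$ elements and each coset $b^{m_\ell}$ with $m_\ell\notin P$ holds exactly $2$. Granting this, summing the per-coset counts gives $|S^2|=1+2|M^+|+|P|=1+2t+|M^++M^+|$. Finally I would apply the elementary sumset bound $|M^++M^+|\ge 2|M^+|-1=2t-1$ (valid since $M^+$ is a nonempty finite set of integers), yielding $|S^2|\ge 1+2t+(2t-1)=4t=4k-4$, as required. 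As a sanity check, conjugating $S$ by $a^{-c}$ formally sends $s_i\mapsto b^{m_i}$ and $s_0\mapsto a^{x_0}$, which transparently explains the collapse of the $T$-products and confirms the formula $|S^2|=1+2t+|M^++M^+|$.
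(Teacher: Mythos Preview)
Your argument is correct and arrives at the same bound as the paper, though by a somewhat different execution. The paper decomposes $S^2$ into $\{s_0^2\}$, $s_0T\cup Ts_0$, and $T^2$, proves these three pieces are pairwise disjoint by a group-theoretic argument (namely $s_0\notin\langle T\rangle$, since $\langle T\rangle$ is abelian while $\langle S\rangle$ is not), shows $s_0T\cap Ts_0=\emptyset$ by deducing that a single commutation $s_0s_i=s_is_0$ would propagate through the abelian $T$ to make all of $S$ commute, and finally bounds $|T^2|\ge 2t-1$ via the $b$-exponent projection; summing gives $(2t-1)+2t+1=4t$. You instead parametrize $x_i=c(3^{m_i}-1)$ explicitly, which collapses every product $s_is_j$ with $i,j\ge1$ to a single element determined by $m_i+m_j$, and then count coset by coset to obtain the \emph{exact} formula $|S^2|=1+2t+|M^{+}+M^{+}|$ before applying the same sumset bound. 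Your route is a little more computational but also more informative: it yields an equality, and your closing remark that conjugation by $a^{-c}$ sends $T$ to $\{b^{m_1},\dots,b^{m_t}\}$ gives a clean conceptual explanation for the collapse that is not present in the paper.
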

\begin{proof}
Observe that the sets $T^2$, $s_0 T\cup Ts_0$ and $\{s_0^2\}$ are disjoint, because\\
(i) $s_0\notin T^2 $, since $s_0^2=(b^{m_0}a^{x_0})^2=b^{2m_0}a^{x_0+3^{m_0}x_0}$ and $T^2\subseteq \{b^ma^x:m\geq 2m_1\}$\\
(ii) $s^2\notin (s_0T\cup Ts_0)$, because $s_0\notin T$\\
(iii) $s_0\notin \langle T\rangle$, because $\langle T\rangle$ is abelian and $\langle S\rangle$ is non-abelian. This implies that $(s_0T\cup Ts_0)\cap T^2=\phi.$\\
Since, $|T|=t$ and if $s_i,\,s_j\in T$, then $s_is_j=b^{m_i+m_j}a^{3^{m_j}x_i+x_j}.$ Thus $|T^2|\geq |M_s\setminus \{m_0\}+M_s\setminus \{m_0\}|\geq 2|M_s\setminus \{m_0\}|-1=2t-1.$ To complete the proof, we will show that $s_0T$ and $Ts_0$ are distinct. 
By contradiction suppose that $s_0T\cap Ts_0\neq \phi$, where\\
 $s_0T=\{s_0s_1,\ldots,s_0s_t\}=\{b^{m_0+m_1}a^{x_1+3^{m_1}x_0},\ldots,b^{m_0+m_t}a^{x_t+3^{m_t}x_0}\}$ and \\
  $Ts_0=\{s_1s_0,\ldots,s_ts_0\}=\{b^{m_0+m_1}a^{x_0+3^{m_0}x_1},\ldots,b^{m_0+m_t}a^{x_0+3^{m_0}x_t}\}$.\\
As $s_0T\cap Ts_0\neq \phi$, therefore there exists some $1\leq i\leq t$ such that \\
  $s_0s_i=b^{m_0+m_i}a^{x_i+3^{m_i}x_0}=s_is_0=b^{m_0+m_i}a^{x_0+3^{m_0}x_i}.$ Thus 
\begin{align}\label{eq1}
(3^{m_i}-1)x_0=(3^{m_0}-1)x_i.
\end{align}
 Choose an arbitrary $1\leq j\leq t.$ Since $T$ is abelian, it follows that 
 $$s_js_i=b^{m_j+m_i}a^{x_i+3^{m_i}x_j}=s_is_j=b^{m_j+m_i}a^{x_j+3^{m_j}x_i}=s_is_j.$$ 
 Thus $(3^{m_i}-1)x_j=(3^{m_j}-1)x_i$,  $x_i=\frac{3^{m_i}-1}{3^{m_j}-1}x_j.$\\
  From \eqref{eq1}, 
 $$(3^{m_i}-1)x_0=(3^{m_0}-1)\frac{3^{m_i}-1}{3^{m_j}-1}x_j.$$ This implies $(3^{m_j}-1)x_0=(3^{m_0}-1)x_j$ and thus 
 $$s_0s_j=b^{m_0+m_j}a^{{x_j}+3^{m_j}x_0}=b^{m_0+m_j}a^{{x_0}+3^{m_j}x_j}=s_js_0.$$ Therefore, $s_0$ commutes with every element of $T$, which is a contradiction to our assumption that $\langle T\rangle$ is abelian and $\langle S\rangle$ is non-abelian.
Consequently,
\begin{align*}
|S^2|&\geq |T^2|+|s_0T\cup Ts_0|+|\{{s_0}^2\}|\\
&=|T^2|+|s_0T|+ |Ts_0|+|\{{s_0}^2\}|\\
&\geq (2t-1)+t+t+1=4t=4|S|-4.
\end{align*} 
\end{proof}
\begin{lemma}\label{l4}
Let $S\subseteq BS(1,3)$ be a finite set of cardinality $k=|S|\geq 2$. Suppose that $S$ is a disjoint union, $S=V_1\cup V_2\cup\cdots\cup V_t$ of $t$ subsets, where $V_i=\{s_i\}=\{b^{m_i}a^{x_i}\}$ and  $|V_i|=1.$ If $S$ is a non-abelian set and $1\leq m_1<m_2<\cdots<m_t$, then $|S^2|\geq 4k-4.$
\end{lemma}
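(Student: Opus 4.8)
The plan is to exploit a clean criterion for when two elements of $S$ commute. Writing $s_i=b^{m_i}a^{x_i}$ with every $m_i\ge 1$, the rule $s_is_j=b^{m_i+m_j}a^{3^{m_j}x_i+x_j}$ shows that $s_i$ and $s_j$ commute if and only if $(3^{m_j}-1)x_i=(3^{m_i}-1)x_j$. Since every $m_i\ge 1$ forces $3^{m_i}-1\ne 0$, this is equivalent to $\lambda_i=\lambda_j$, where $\lambda_i:=x_i/(3^{m_i}-1)\in\mathbb{Q}$. This is exactly the point at which the hypothesis $m_i\ge 1$ for \emph{all} $i$ is used, in contrast with Lemma \ref{l3}, where the distinguished element $s_0$ had $m_0=0$. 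Consequently commuting is an equivalence relation on $S$, partitioning it into classes on which $\lambda$ is constant, and $S$ is non-abelian precisely when there are at least two classes. I would record at once that within a single class the product depends only on the sum of exponents, namely $s_is_j=b^{m_i+m_j}a^{\lambda(3^{m_i+m_j}-1)}$.

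Next I would grade $S^2$ along the homomorphism $\beta\colon b^ma^x\mapsto m$. A product $s_is_j$ lands in the fibre $\beta^{-1}(c)$ with $c=m_i+m_j$, so $S^2$ is indexed by the sumset $M+M$ where $M=\{m_1,\dots,m_t\}$, and within a fixed fibre two products coincide iff they have the same $a$-exponent. For a class $G_\alpha$ with value $\lambda^{(\alpha)}$ and exponent set $M_\alpha$, the same-class products are exactly $\{b^ca^{\lambda^{(\alpha)}(3^c-1)}:c\in M_\alpha+M_\alpha\}$, of size $|M_\alpha+M_\alpha|\ge 2|M_\alpha|-1$; and since $3^c-1\ne 0$, distinct classes never produce the same element, so these are pairwise disjoint and number at least $\sum_\alpha(2|M_\alpha|-1)$. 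In the decisive two-class case (classes of sizes $p,q$ with $p+q=t$ and exponent sets $M_1,M_2$) this already gives $2t-2$ distinct products; the cross products $s_is_j,\,s_js_i$ with $i,j$ in different classes satisfy $s_is_j\ne s_js_i$, occupy the $\ge p+q-1$ fibres of $M_1+M_2$ (two per fibre), and a short computation shows each avoids the same-class products of both classes involved. As products in different fibres are automatically distinct, the cross products contribute a further $2(t-1)$ elements, yielding $|S^2|\ge (2t-2)+(2t-2)=4t-4$.

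For arbitrary numbers of classes I would introduce the invariant $\Lambda(b^ca^y):=y/(3^c-1)$. Substituting $x_i=\lambda_i(3^{m_i}-1)$ gives $\Lambda(s_is_j)=w\lambda_i+(1-w)\lambda_j$ with $w=\dfrac{3^{m_j}(3^{m_i}-1)}{3^{m_i+m_j}-1}\in(0,1)$, so $\Lambda$ of a product is a strict convex combination of the two factors' $\lambda$-values: equal to the common value for a same-class product and strictly between the two values for a cross product. Because two products in a fixed fibre agree iff they share the same $\Lambda$, ordering the class values $\lambda^{(1)}<\cdots<\lambda^{(r)}$ shows that a cross product between $\lambda$-adjacent classes cannot equal any same-class product at all. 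I would then sum the fibrewise counts, peeling off the class of largest $\lambda$ (whose same-class products carry the strictly maximal value of $\Lambda$, hence are new) together with its cross products to the next class, to lift the two-class bound to $4t-4$ in general.

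The main obstacle is precisely this last piece of bookkeeping. For special values of the $\lambda$'s a cross product between two classes can coincide with a same-class product of a third class whose value lies strictly between them, and several class-pairs may feed a single fibre of $M+M$. The convexity of $\Lambda$ (a third class can interfere only when its $\lambda$ lies strictly between the two) together with the total order on the $\lambda^{(\alpha)}$ is the tool that confines such collisions; the delicate part is to check that even in the degenerate configurations, singleton classes, or $M$ an arithmetic progression so that $|M+M|$ is minimal, this accounting loses nothing and the constant remains exactly $4t-4$ rather than slipping to $4t-5$.
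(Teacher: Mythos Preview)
Your $\lambda$/$\Lambda$ framework is genuinely different from the paper's argument and conceptually cleaner: the observation that $s_i,s_j$ commute iff $\lambda_i=\lambda_j$ turns commuting into an equivalence relation, and the convexity identity $\Lambda(s_is_j)=w\lambda_i+(1-w)\lambda_j$ with $w\in(0,1)$ is an explicit substitute for the paper's appeal to orderability of $BS(1,3)$. Your two–class count is correct and complete.

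The gap is exactly where you locate it, and it is real. Peeling off the class $G_r$ of largest $\lambda$ gives at least $2p_r-1$ same–class products (all with $\Lambda=\lambda^{(r)}$, hence new) and at least $2(p_{r-1}+p_r-1)$ cross products with $G_{r-1}$ (all with $\Lambda\in(\lambda^{(r-1)},\lambda^{(r)})$, hence new); but this totals $4p_r+2p_{r-1}-3$, which is only $4p_r-1$ when $p_{r-1}=1$. Cross products $G_r\times G_\alpha$ with $\alpha<r-1$ can have $\Lambda\le\lambda^{(r-1)}$ and may genuinely lie in $(S')^2$, so the $\Lambda$–ordering alone does not supply the missing element. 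Concretely, with three singleton classes your scheme yields $7$, not $8$.

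The paper avoids this by inducting along the \emph{$m$–order} rather than the $\lambda$–order: it removes the element $s_1$ of smallest exponent $m_1$. Then the fibres $2m_1$ and $m_1+m_2$ are strictly below $2m_2=\min(M'+M')$ and hence entirely new, so $s_1^2,\,s_1s_2,\,s_2s_1$ are automatically outside $(S')^2$. The fourth element is sought in the fibre $m_1+m_3$, whose only possible $(S')^2$–occupant is $s_2^2$; one then checks that $s_1s_3=s_2^2=s_3s_1$ is impossible. Your $\Lambda$–invariant actually makes this last step, and the paper's ``maximal abelian prefix'' argument in its Case~1, transparent (both equalities would force $\lambda_1=\lambda_3$); but the decisive idea that rescues the missing $+1$ is to peel by $m$ so as to manufacture new fibres, not by $\lambda$.
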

\begin{proof}
 We proceed by induction on $t$ and clearly $t=k\geq 2.$  If $t=2,$ then $S=\{s_1,s_2\}$
 and since ${s_1}^2\neq {s_2}^2$ and $s_1s_2\neq s_2s_1$, it follows that $|S^2|=4=4|S|-4,$ which is required.\\
 For the inductive step, let $t\geq 3$ be an integer, and assume that Lemma \ref{l4} holds for each set $T\subseteq BS(1,3)$ of size $2\leq |T|\leq t-1.$ Denote $S'=S\setminus \{s_1\}$. By above part of this Lemma, we can suppose $\langle S'\rangle$ is non-abelian. We will continue the proof by two cases.\\
 {\bf{Case 1:}} Let us assume that $s_1s_2=s_2s_1.$ \\
 Choose $n\geq 2$ maximal such that the set $S^{^{*}}=\{s_1,s_2,\ldots,s_n\}$ is abelian. Note that $n<t$,  because $S$ is a non-abelian set and $s_{n+1}\notin \langle S^{^{*}}\rangle.$ Moreover, $s_1s_{n+1}\notin {S'}^2$, since otherwise $s_1s_{n+1}=s_us_v$ for some $1\leq u,v\leq t$ and hence 
 $b^{m_1+m_{n+1}}=b^{m_u+m_v}$ implies $m_1<m_u$ and $m_v<m_{n+1}$, $1<u,v<n+1$ and $s_{n+1}\in \langle S^{^{*}}\rangle$, a contradiction. Similarly, $s_{n+1}s_1\notin {S'}^2$. \\
If $s_1s_{n+1}\neq s_{n+1}s_1$, then $\{s_1^2,s_1s_2,s_1s_{n+1},s_{n+1}s_1\}\subseteq S^2\setminus S'^2$ and from induction hypothesis for $S'$:\\
  $|S^2|\geq |{S'}^2|+|\{s_1^2,s_1s_2,s_1s_{n+1},s_{n+1}s_1\}|\geq (4|S'|-4)+4=4|S|-4.$ Next we will complete the proof of Lemma by showing that if
 \begin{align} \label{eq2}
 s_1s_{n+1}=s_{n+1}s_1
 \end{align} 
 then
  $s_js_{n+1}=s_{n+1}s_j$ for every $1\leq j\leq n,$ which contradicts the maximality of $n.$\\
 Let us denote $m=m_{n+1}$, $x=x_{n+1}$ and $s_{n+1}=b^{m}a^{x}.$ By equation \eqref{eq2}
 \begin{align*}
 s_1s_{n+1}&=(b^{m_1}a^{x_1})(b^ma^{x})=b^{m_1+m}a^{x+3^{m}x_1}\\
 &=s_{n+1}s_1=(b^ma^{x})b^{m_1}a^{x_1}=b^{m+m_1}a^{x_1+3^{m_1}x}
 \end{align*}
 and thus 
 \begin{align}\label{eq3}
 (3^{m_1}-1)x=(3^{m}-1)x_1.
 \end{align} 
 Choose an arbitrary $1\leq j\leq n.$ Using $s_1s_j=s_js_1,$ we get $x_1=\frac{3^{m_1}-1}{3^{m_j}-1}x_j$. By equation \eqref{eq3}, $(3^{m_1}-1)x=(3^m-1)\frac{3^{m_1}-1}{3^{m_j}-1}x_j.$ Since $m_1\geq 1$, $(3^{m_j}-1)x=(3^m-1)x_j.$  Hence $s_js_{n+1}=s_{n+1}s_j,$ which is a contradiction.\\
 {\bf{Case 2:}} $s_1s_2\neq s_2s_1.$ We claim that either $s_1s_3\neq {s_2}^2$ or $s_3s_1\neq {s_2}^2.$ Indeed if $s_1s_3={s_2}^2$ and $s_3s_1={s_2}^2$ then  $s_1s_3=s_3s_1$ and $s_1{s_2}^2={s_2}^2s_1.$ As $BS(1,3)$ is orderable, then $s_1s_2=s_2s_1$ which is a contradiction. Hence our claim is proved. Thus $|\{s_1s_3,s_3s_1\}\setminus \{{s_2}^2\}|\geq 1.$\\
 Since, $\{{s_1}^2,s_1s_2,s_2s_1\}\subseteq S^2\setminus {S'}^2$ and $\{s_1s_3,s_3s_1\}\setminus \{{s_2}^2\}\subseteq S^2\setminus {S'}^2.$ Then by induction hypothesis for $S'$, that $|S^2|\geq |{S'}^2|+|\{{s_1}^2,s_1s_2,s_2s_1,s_1s_3,s_3s_1\}\setminus {S'}^2|\geq (4|S'|-4)+4=4|S|-4.$ This completes the proof of Lemma.  
  \end{proof}
  \section{Proof of Main Theorem}
\begin{theorem}\label{thm3}
Let $S$ be a finite non-abelian subset of $BS^{+}(1,3)$ of size $k=|S|\geq 3.$ \\
(I) If $S=S_0\cup S_1\cup\cdots\cup S_t,$ where $t\geq 1,$  then $|S^2|\geq \frac{7}{2}|S|-6.$\\
(II) If $S=b^ra^A$, where $A$ $(|A|=k)$ is a finite set of integers containing $0$ and $|S^2|=(t+2)k-2t+h$, where $c_3(A)=t.$  Then $A$ is contained in an arithmetic progression of size $k+h<\frac{5}{2}k-t(k-2)-6.$
 \end{theorem}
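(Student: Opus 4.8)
The plan is to prove the two parts separately, reducing part (I) to the structural Lemmas \ref{l1}--\ref{l4} via an exhaustive case analysis on the coset decomposition, and part (II) to the inverse sum-of-dilates Theorem \ref{thm1} via the residue structure modulo $3$.

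For part (I), write $S=S_0\cup S_1\cup\cdots\cup S_t$ with $S_j=b^{m_j}a^{A_j}$, $m_0<m_1<\cdots<m_t$, and $k_j=|A_j|$. Since $4k-4\geq \tfrac72 k-6$ for every $k$, any configuration from which we can already extract $|S^2|\geq 4k-4$ is immediately settled; the real work is only to keep control of the weaker constant $\tfrac72 k-6$ in the remaining cases. If $t=1$ there are exactly two cosets and Lemma \ref{l1}(b) applies verbatim for $k\geq 4$, while the single boundary case $k=3$ is checked by specialising the estimates inside that proof. If $t\geq 2$ and every part is a singleton, then either $m_0\geq 1$, where Lemma \ref{l4} gives $|S^2|\geq 4k-4$, or $m_0=0$ with $S_0=\{a^{x_0}\}$, where I would test whether $\langle S\setminus S_0\rangle$ is abelian: the abelian case is exactly Lemma \ref{l3}, and the non-abelian case follows by applying Lemma \ref{l4} to $T=S\setminus S_0$ and adjoining the products $s_0^2$, $s_0T$, $Ts_0$ that lie in cosets disjoint from $T^2$. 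If $t\geq 2$ and exactly one part is large, Lemma \ref{l2}(a) covers a large $S_0$ and Lemma \ref{l2}(b) covers a large $S_t$.

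The genuinely delicate configurations are those with two or more large parts, or a single large part sitting in an interior coset, which are not literally one of the lemmas. Here I would run the direct counting that underlies the lemmas: all products $S_iS_j$ lie in the coset $b^{m_i+m_j}$, the diagonal blocks $S_j^2$ occupy the distinct cosets $b^{2m_j}$ with $|S_j^2|\geq 4k_j-4$ (or $|S_0^2|\geq 2k_0-1$ when $m_0=0$), and each off-diagonal pair contributes $|S_iS_j\cup S_jS_i|\geq k_i+k_j-1$; summing these contributions over pairwise disjoint cosets and optimising over the profile $(k_0,\dots,k_t)$ should yield $\tfrac72 k-6$. I expect this bookkeeping, namely proving that the bound survives in the worst split and at $k=3$, to be the main obstacle in part (I).

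For part (II) the set $S=b^ra^A$ lies in a single coset, so $S^2=b^{2r}a^{A+3^r\cdot A}$ and $|S^2|=|A+3^r\cdot A|$. The aim is to transfer Theorem \ref{thm1}, stated for the dilate $3$, to the dilate $3^r$. Since $3^r\cdot A\subseteq 3\mathbb{Z}$ for $r\geq 1$, the sumset splits as a disjoint union over residues modulo $3$, namely $A+3^r\cdot A=\bigsqcup_{i}(A_i+3^r\cdot A)$ with $A_i=\{a\in A:a\equiv i \pmod 3\}$, and writing $A_i=i+3B_i$ gives $|A_i+3^r\cdot A|=|B_i+3^{r-1}\cdot A|$. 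I would iterate this reduction to pass from the dilate $3^r$ down to the dilate $3$, invoke Theorem \ref{thm1} with $c_3(A)=t$, and conclude that $A$ lies in an arithmetic progression of length at most $k+h$. The stated bound $k+h<\tfrac52 k-t(k-2)-6$ is then just the small-doubling hypothesis $|S^2|=(t+2)k-2t+h$ rewritten, since that inequality is algebraically equivalent to $|S^2|<\tfrac72 k-6$. The crux here is the dilate reduction itself: controlling the residue blocks $|B_i+3^{r-1}\cdot A|$ uniformly in $r$ so that the count matches the baseline $(t+2)k-2t$ and the progression conclusion of Theorem \ref{thm1} is faithfully preserved under the passage from $3$ to $3^r$.
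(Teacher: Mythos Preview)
Your outline for part~(I) has the right ingredients but is missing the organizing idea that makes the paper's argument go through: induction on~$t$. The paper does not attempt an exhaustive case split on which $S_j$ are large; instead it removes $S_t$, applies the inductive hypothesis to $S^*=S\setminus S_t$ (which is non-abelian whenever some $k_j\geq 2$ for $j\leq t-1$, by Lemma~\ref{l1}(a)), and then only needs the local estimate $|S_t^2\cup S_tS_{t-1}\cup S_{t-1}S_t|\geq \tfrac{7}{2}k_t$, whose cosets $b^{2m_t}$ and $b^{m_t+m_{t-1}}$ are automatically disjoint from those carrying $(S^*)^2$. Your ``direct counting'' proposal, summing $|S_iS_j\cup S_jS_i|$ over off-diagonal pairs, runs into the difficulty you gloss over: the coset indices $m_i+m_j$ are not pairwise distinct in general, so you cannot simply add the pairwise contributions, and you yourself flag this bookkeeping as the ``main obstacle''. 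The inductive peeling avoids this entirely; the paper's four cases (Case~1 handles $k_t\geq 2$ or $k_{t-1}\geq 2$, Case~3 handles an interior large part $S_j$ by inducting on $S_0\cup\cdots\cup S_j$ instead) are what replace your vague optimization.

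For part~(II) you are working harder than necessary and in a direction that does not close. The paper first observes that the hypothesis $|S^2|<\tfrac{7}{2}k-6$ together with part~(I) forces $S$ to lie in a single coset, and then that $r\geq 2$ would give $|S^2|=|A+3^r\!\cdot A|\geq 4k-4>\tfrac{7}{2}k-6$ by Corollary~\ref{cor1}, a contradiction; hence $r=1$ and Theorem~\ref{thm1} applies directly with no reduction needed. Your iterated residue decomposition, by contrast, turns $A+3^r\!\cdot A$ into sums of the form $B_i+3^{r-1}\!\cdot A$ with $B_i\subsetneq A$, which are sums of dilates of \emph{two different} sets, so Theorem~\ref{thm1} (stated only for $A+3\cdot A$) does not apply at the next step and the iteration stalls.
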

\begin{proof} 
 (I) 
  If $t=1$, then result holds from Lemma \ref{l1}. For the induction step, let $t\geq 2$ be an integer. Assume that the result holds for any non-abelian finite set $T\subseteq BS(1,3),$ which lies in $u$ distinct cosets of $\langle a\rangle =a^{\mathbb{Z}}$, where $2\leq u<t$.\\
  Denote $S^{*}=S\setminus S_t$, $k^{*}=|S^{*}|=k-k_t.$ As $S^{*}$ generates a non-abelian group, then our induction hypothesis implies that\\
 $|(S^{*})^2|\geq \frac{7}{2}k^{*}-6.$ It is sufficient to show that \\
  $|{S_t}^2\cup S_tS_{t-1}\cup S_{t-1}S_t|\geq \frac{7}{2}k_t.$
Now we examine four cases:\\
{\bf{Case 1:}} Assume that either $k_t\geq 2$ or $k_{t-1}\geq 2.$\\
Recall that $0\leq m_0<m_1<\cdots<m_t.$ As $t\geq 2$ implies that $m_t\geq 2.$ Thus by Corollary \ref{cor1}, ${S_t}^2\geq \max\{4k_t-4,1\}\geq 3k_t-2.$\\  
We will examine two subcases:\\
{\bf{Subcase 1:}} Let us assume that $k_t+2k_{t-1}\geq 6$, then 
\begin{align*}
|{S_t}^2\cup S_tS_{t-1}\cup S_{t-1}S_t| &\geq |{S_t}^2|+|S_tS_{t-1}\cup S_{t-1}S_t|\geq (3k_t-2)+(k_t+k_{t-1}-1)\\
& \geq 4k_t+k_{t-1}-3\geq \frac{7}{2}k_t.
\end{align*}
If $k_j=|S_j|\geq 2$ for some $j$, $0\leq j\leq t-1$, then by Lemma \ref{l1}, $S^{*}$ generates a non-abelian group. By induction hypothesis, 
\begin{align}
|S^2| &\geq |(S^{*})^2|+|{S_t}^2\cup S_tS_{t-1}\cup S_{t-1}S_t|\nonumber\\
 &\geq \Big(\frac{7}{2}k^{*}-6\Big)+\frac{7}{2}k_t=\frac{7}{2}k-6.
 \end{align} 
If $k_j=1$ for all $0\leq j\leq t-1$, then $k_t\geq 6-2k_{t-1}=4.$ In this case, result follows from Lemma \ref{l2}. \\
{\bf{Subcase 2:}} Next we assume that $k_t+2k_{t-1}< 6.$ We have following possibilities for the values of $k_t$ and $k_{t-1}$: 
$$k_t=3, \,k_{t-1}=1$$ 
 $$k_t=2,\, k_{t-1}=1$$ 
 $$k_t=1,\,k_{t-1}=2.$$\\
 
\noindent  For $k_t=3$ and $k_{t-1}=1$, then by Corollary \ref{cor1}, ${S_t}^2\geq 4k_t-4$ and therefore
\begin{align*}
 |{S_t}^2\cup S_tS_{t-1}\cup S_{t-1}S_t| &\geq |{S_t}^2|+|S_tS_{t-1}\cup  S_{t-1}S_t |\geq (4k_t-4)+(k_t+k_{t-1}-1)\\
& \geq 5k_t-5\geq \frac{7}{2}k_t.
\end{align*}
 
\noindent If $k_j = |S_j| \geq 2$ for a certain index $j$, $0 \leq j \leq t-1$, then, according to Lemma \ref{l1}, the set $S^{*}$ produces a non-abelian group. This conclusion is supported by the induction hypothesis.

 \noindent If $k_j=1$ for all $0\leq j\leq t-1,$ then result follows from Lemma \ref{l4}, in view of $k_t=3.$\\

\noindent For $k_t=2$ and $k_{t-1}=1$, then write $S_{t-1}=\{b^ua^x\}$, $S_t=\{b^va^y,b^va^z\}$, where $y<z$ are integers. $S_{t-1}S_t=b^{u+v}\{a^{3^vx+y},{a^{3^vx+z}}\}$ and $S_{t}S_{t-1}=b^{u+v}\{a^{3^uy+x},a^{3^uz+x}\}.$ Observe that $S_{t-1}S_t\neq S_tS_{t-1}.$ Indeed, if $S_{t-1}S_t=S_tS_{t-1}$, then $3^vx+y=3^uy+x$ and $3^vx+z=3^uz+x$ which implies $(3^u-1)y=(3^v-1)y=(3^u-1)z$, contradicts as $y<z.$ Hence 
\begin{align*}
|{S_t}^2\cup S_tS_{t-1}\cup S_{t-1}S_t| &\geq |{S_t}^2|+|S_tS_{t-1}\cup  S_{t-1}S_t|\\
 &\geq (3k_t-2)+3\\
& =4+3 =\frac{7}{2}k_t.
\end{align*}

\noindent If there is $0\leq j\leq t-1$ such that $k_j=|S_j|\geq 2$, then by Lemma \ref{l1}, $S^{*}$ generates a non-abelian group. And by induction hypothesis, 
\begin{align}
|S^2| &\geq |(S^{*})^2|+|{S_t}^2\cup S_tS_{t-1}\cup S_{t-1}S_t|\nonumber\\
 &\geq \Big(\frac{7}{2}k^{*}-6\Big)+\frac{7}{2}k_t=\frac{7}{2}k-6.
 \end{align} 

\noindent If $k_j=1$ for all $0\leq j\leq t-1$, then result follows from Lemma \ref{l4}, in view of $k_t=2.$\\ 

\noindent For  $k_t=1$ and $k_{t-1}=2$, then we can write $S_{t-1}=\{b^ua^y,b^ua^z\}$, $S_{t}=\{b^va^x\}$ where $1\leq u=m_{t-1}<v=m_t$ and $x,y,z$ are integers and $y<z$. $S_{t-1}S_t=b^{u+v}\{a^{3^vy+x},a^{3^vz+x}\}$ and $S_{t}S_{t-1}=b^{u+v}\{a^{3^ux+y},a^{3^ux+z}\}.$ Note that $S_{t-1}S_t\neq S_tS_{t-1}.$ Indeed, if $S_{t-1}S_t=S_tS_{t-1}$, then  $(3^u-1)y=(3^v-1)x=(3^u-1)z$, which contradicts as $y<z$ and $v\geq 1.$ Hence 
\begin{align*}
|{S_t}^2\cup S_tS_{t-1}\cup S_{t-1}S_t| &\geq |{S_t}^2|+|S_tS_{t-1}\cup  S_{t-1}S_t|\\
& \geq 1+3> \frac{7}{2}k_t.
\end{align*}
By Lemma \ref{l1}, $S^{*}$ generates a non-abelian group. And by induction hypothesis, 
\begin{align}
|S^2| &\geq |(S^{*})^2|+|{S_t}^2\cup S_tS_{t-1}\cup S_{t-1}S_t|\nonumber\\
 &\geq \Big(\frac{7}{2}k^{*}-6\Big)+\frac{7}{2}k_t=\frac{7}{2}k-6.
 \end{align} 
{\bf{Case 2:}} Assume that $k_t=k_{t-1}=\cdots=k_1=1$ and $k_0\geq 2.$ In this case, the result follows from Lemma \ref{l3}.\\
{\bf{Case 3:}} Assume that $k_t=k_{t-1}=1$ and there is $1\leq j\leq t-2$ such that $k_j\geq 2$ and $k_i=1$ for every $i\in \{j+1,\ldots,t\}.$\\
Let $S_j=b^{m_j}a^{A_j}=\{b^{m_j}a^{y_1},b^{m_j}a^{y_2},\ldots,b^{m_j}a^{y_{k_j}}\}$ and $S_i=\{b^{m_i}a^{x_i}\}$ for every $i\in \{j+1,\ldots,t\}.$ Clearly $|S_jS_i|=|S_iS_j|=k_j$ and using the proof of Lemma \ref{l3}, we get
\begin{align}\label{eq5}
 |S_jS_i\cup S_iS_j|\geq k_{j}+1
 \end{align}
  for every $i\in \{j+1,\ldots,t\}.$ Note that $k_j\geq 2,$ so by Lemma \ref{l1}, the set ${S_j}^{*}=S_0\cup S_1\cup\cdots\cup S_j$ is non abelian. By applying induction hypothesis to ${S_j}^{*}$ and by \eqref{eq5}, we obtain
  \begin{align*}
  |S^2|&\geq |{s_j}^{*}{s_j}^{*}|+\sum_{u=j+1}^t|S_jS_u\cup S_uS_j|+\sum_{u=j+1}^t|S_uS_t|\\
  & \geq \Big(\frac{7}{2}|{s_j}^{*}|-6\Big)+(k_j+1)(t-j)+(t-j)\\
  &= \Big(\frac{7}{2}|{s_j}^{*}|-6\Big)+(k_j+2)(t-j)\geq\Big(\frac{7}{2}{s_j}^{*}-6\Big)+4(t-j)\\
  &=\Big(\frac{7}{2}|{s_j}^{*}|-6\Big)+4(k-|{s_j}^{*}|)\\
  &=4k-6-\frac{1}{2}|{s_j}^{*}|\geq \frac{7}{2}k-6,
  \end{align*}
  which is required.\\
  {\bf{Case 4:}} Assume that $k_i=1$ for every $0\leq i\leq t.$ If the set $S'=S\setminus\{s_0\}$ is abelian, then Lemma \ref{l3} implies that $|S^2|\geq 4k-4\geq \frac{7}{2}k-6,$ which is required. Next, we  assume that $S'=S_1\cup S_2\cup \cdots \cup S_t$ is non-abelian. Since $t\geq 2$, it follows that $k=t+1\geq 3$ and Lemma \ref{l4} implies that $|{S'}^2|\geq 4|S'|-4.$ Moreover, $\{{s_0}^2,s_0s_1\}\subseteq S^2\setminus {{S'}^2}.$ We consider two cases:\\
(a) If $k=|S|\geq 4,$ then $|S^2|\geq |{S'}^2|+2\geq 4(k-1)-4+2=4k-6\geq \frac{7}{2}k-6,$ as required.\\
(b) If $k=3$, then $S=\{s_0,s_1,s_2\}$, $S'=\{s_1,s_2\}$, $s_1s_2\neq s_2s_1$, $|{S'}^2|=4$ and $S^2=\{{s_0}^2,s_0s_1,s_1s_0,s_0s_2,s_2s_0\}\cup {S'}^2.$ We claim that either $s_0s_2\neq {s_1}^2$ or $s_2s_0\neq {s_1}^2$. Indeed, if $s_0s_2={s_1}^2$ and $s_2s_0={s_1}^2$ then $s_0s_2=s_2s_0$ and hence ${s_1}^2s_2=s_2{s_1}^2.$ As $BS(1,3)$ is orderable so this equality implies that $s_1s_2=s_2s_1$, which is contradiction. We conclude that $|S^2|\geq |\{{s_0}^2,s_0s_1,s_0s_2,s_2s_0\}\setminus {S'}^2|+|{S'}^2|\geq 3+4=7>\frac{7}{2}|S|-6.$ This completes the proof of part (I).\\
(II) Let $S$ be a finite set satisfying $|S^2|=(t+2)k-2t+h<\frac{7}{2}k-6$, and by above $S=S_0=b^{m_0}a^{A_0}$: $m_0\geq 0.$ As the set $S$ is non abelian so $m_0\geq 1.$ But $m_0=1$ because if $m_0\geq 2$, then Corollary \ref{cor1} implies that $|S^2|\geq 4k-4>\frac{7}{2}k-6$ which contradicts our hypothesis. Hence $S=S_0=ba^A$ and Theorem \ref{thm2} is applicable. By Theorem \ref{thm1} if $c_3(A)=t$ and $|S^2|=(t+2)k-2t+h<\frac{7}{2}k-6$ then $h<\frac{3}{2}k-t(k-2)-6$. And $A$ is contained in arithmetic progression of size $k+h<k+\frac{3}{2}k-t(k-2)-6=\frac{5}{2}k-t(k-2)-6.$ Hence the proof of Theorem \ref{thm3} is complete.
\end{proof}

\end{document}